\numberwithin{equation}{section}
\numberwithin{table}{section}
\newlength{\dhatheight}
\newcommand{\doublehat}[1]{%
    \settoheight{\dhatheight}{\ensuremath{\widehat{#1}}}%
    \addtolength{\dhatheight}{-0.35ex}%
    \widehat{\vphantom{\rule{0pt}{\dhatheight}}%
    \smash{\widehat{#1}}}}
\newcommand\reallywidehat[1]{%
\savestack{\tmpbox}{\stretchto{%
  \scaleto{%
    \scalerel*[\widthof{\ensuremath{#1}}]{\kern-.6pt\bigwedge\kern-.6pt}%
    {\rule[-\textheight/2]{1ex}{\textheight}}
  }{\textheight}%
}{0.5ex}}%
\stackon[1pt]{#1}{\tmpbox}%
}
\theoremstyle{plain}
\newtheorem{defn}{Definition}[section]
\newtheorem{lem}[defn]{Lemma}
\newtheorem*{thmA}{Theorem A}
\newtheorem*{thmB}{Theorem B}
\newtheorem*{thmC}{Theorem C}
\newtheorem*{bridge}{Bridge Theorem}
\newtheorem{cor}[defn]{Corollary}
\newtheorem{prop}[defn]{Proposition}
\newtheorem{fact}[defn]{Fact}
\theoremstyle{remark}
\newtheorem{ex}[defn]{Example}
\newtheorem{rem}[defn]{Remark}
\global\def\ca#1{\mathcal{#1}}
\newcommand{\K}{\mathbb K}
\newcommand{\N}{\mathbb N}
\newcommand{\Z}{\mathbb Z}
\newcommand{\B}{\mathcal{B}}
\newcommand{\BV}{\B(V)}
\newcommand{\C}{\mathcal{C}}
\newcommand{\LLC}{_{\K}\mathrm{LLC}}
\newcommand{\LC}{_{\K}\mathrm{LC}}
\newcommand{\Vect}{_{\K}\mathrm{Vect}}
\def\mod{\mathrm{Mod}}
\def\Flow{\mathrm{Flow}}
\def\f{\phi}
\def\ent{\mathrm{ent}}
\def\Hom{\mathrm{CHom}}
\def\f{\phi}
\def\ent{\mathrm{ent}}
\def\Hom{\mathrm{Hom}}
\global\def\codi#1{\mathrm{codim}\,{#1}}
\global\def\ca#1{\mathcal{#1}}
\def\rk{\mathrm{rk}}
\title{The corank of a flow over the category of linearly compact vector spaces.}
\begin{document}
\date{}
\author{Ilaria Castellano\\{\footnotesize {\tt ilaria.castellano88@gmail.com}} \\ {\footnotesize University of Southampton}\\ {\footnotesize Building 54, Salisbury Road - 50171BJ Southampton (UK)}} 
\maketitle
\begin{abstract} For a topological flow $(V,\f)$ - i.e., $V$ is a linearly compact vector space and $\f$ a continuous endomorphism of $V$ - we gain  a deep understanding of the relationship between  $(V,\f)$ and the Bernoulli shift: a topological flow $(V,\phi)$ is essentially a product of one-dimensional left Bernoulli shifts as many as $\ent^*(V,\f)$ counts. This novel comprehension brings us to introduce a notion of {\it corank} for topological flows designed for coinciding with the value of the topological entropy of $(V,\f)$.  
As an application, we provide an alternative proof of the so-called Bridge Theorem for locally linearly compact vector spaces connecting the topological entropy to the algebraic entropy by means of Lefschetz duality.
 \end{abstract}
 
 \bigskip
\noindent\emph{Key words and phrases: linearly compact vector space, algebraic entropy, topological entropy, algebraic dynamical system, dual goldie dimension, corank, Bernoulli shift.} 

\section{Introduction}
 Adler, Konheim and McAndrew \cite{AKM} introduced the notion of topological entropy $\mathrm{h}_{\textup{top}}$ for continuous self-maps of compact spaces, and they sketched a definition of the algebraic entropy $\mathrm{h}_{\textup{alg}}$ for endomorphisms of abelian groups. Later on these entropy functions have been pushed forward by many different authors, e.g., \cite{Bowen,Hood,Weiss,SZ} (see also \cite{DGB1} for a complete survey). 

 Here we investigate the topological entropy $\ent^*$ and the algebraic entropy $\ent$ introduced in \cite{CGB1,CGB2, IC} for continuous endomorphisms of  locally  linearly compact vector spaces (see \S~\ref{s:preliminaries} for all definitions and properties). In particular, the ultimate aim of this paper is to characterise the topological entropy $\ent^*$ for linearly compact vector spaces in terms of a corank function; and this aim is achieved with Theorem~C in Section~5.
 
 \medskip

Let $\LC$ denote the category of linearly compact $\K$-spaces and their continuous endomorphisms. A {\it topological flow} is a pair $(V,\phi)$ consisting of an object $V$ of $\LC$ and an endomorphism $\phi$ of $ V$.  Recall that the topological entropy $\ent^*$ for linearly compact $\K$-spaces is actually an invariant $$\ent^*\colon \Flow(\LC)\to \N\cup\{\infty\},$$
of the category $\Flow(\LC)$ of topological flows.  Therefore, the aim is to design a {\it corank function} 
\begin{equation*}
\mathrm{cork}\colon\Flow(\LC)\to\N\cup\{\infty\}
\end{equation*} 
 for topological flows able to satisfy the following equality: $$\mathrm{cork}(V,\phi)=\ent^*(V,\phi),\quad\forall\ (V,\phi)\in\mathrm{ob}(\Flow(\LC)).$$ 
 The initial motivation is the well-know characterisation of the algebraic entropy  of algebraic flows (i.e., pairs of type $(W,\psi)$ consisting of a discrete $\K$-space $W$ and an endomorphism $\psi$ of $ W$) in terms of the torsion-free rank of modules over PID; a detailed proof of this characterisation is given in \cite{GBSalce}. Since $\LC$ and $\Vect$ are known to be dual each other by  Lefschetz duality (see \S 2.1, 2.2), it is worth looking for the topological counterpart of the characterisation proved in \cite{GBSalce}.
 
 \medskip
 
After a few pages of preliminaries, we essentially start with Section~\ref{s:counting} that is dedicated to topological flows of non-zero topological entropy. There we give an accurate description of the role played by left Bernoulli shifts in order to get $\ent^*(V,\f)>0$. The main idea is to look at those closed $\phi$-invariant linear subspaces $S$ of $V$ such that $\phi$ induces a Bernoulli-like action on the factor $V/S$. Then $\ent^*(V,\f)$ is not zero only when $S$ turns out to be a $\phi$-cotrajectory $C(\f,U)=U\cap \f^{-1}U\cap\ldots\cap\f^{-n+1}U\cap\ldots$ of a fairly large linearly compact open subspace $U$ of $V$; see Corollary~\ref{cor:exist}. Such a cotrajectory has to be both {\it cocyclic} (i.e., $\dim_\K(V/U)=1$) and {\it non-stationary} (i.e., $C(\f,U)$ is not open in $V$). But one can say even more:
\begin{thmA}
For  a topological flow $(V,\f)$ the following are equivalent:
\begin{itemize}
\item[a)] the topological entropy $\ent^*(V,\f)=k\in\N$,
\item[b)] there exist $k\in\N$ coindependent non-stationary cocyclic $\f$-cotrajectories $\{C(\f,U_i)\mid i=1,\ldots,k\}$ such that $$\ent^*(C,\f\restriction_C)=0,\quad\text{where}\quad C=\bigcap_{i=1}^kC(\f,U_i).$$
\end{itemize}
Moreover, $\ent^*(V,\f)=\infty$ if, and only if, there exists an infinite (countable) coindependent family of non-stationary cocyclic $\f$-cotrajectories.
\end{thmA}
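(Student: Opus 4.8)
The plan is to reduce everything to two facts established earlier in the paper: the \textbf{Addition Theorem}, giving $\ent^*(V,\f)=\ent^*(S,\f\restriction_S)+\ent^*(V/S,\bar\f)$ for every closed $\f$-invariant subspace $S$, and Corollary~\ref{cor:exist}, which guarantees that whenever $\ent^*(V,\f)>0$ there is a non-stationary cocyclic $\f$-cotrajectory $C(\f,U)$ whose factor $V/C(\f,U)$ is a one-dimensional left Bernoulli shift, and hence carries $\ent^*=1$. I would also exploit that coindependence of a family $\{C(\f,U_i)\}$ of closed $\f$-invariant subspaces means, in the dual Goldie sense, that the quotient of $V$ by their intersection splits (as a flow, intertwining the induced maps) into the finite product $\prod_i V/C(\f,U_i)$ of the individual factors; combined with additivity this makes the entropy of such a quotient equal to the number of factors.

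For the implication (b) $\Rightarrow$ (a), set $C=\bigcap_{i=1}^k C(\f,U_i)$. Applying the Addition Theorem to the closed $\f$-invariant subspace $C$ gives $\ent^*(V,\f)=\ent^*(C,\f\restriction_C)+\ent^*(V/C,\bar\f)=\ent^*(V/C,\bar\f)$, since the first summand vanishes by hypothesis. Coindependence then identifies $V/C$ with $\prod_{i=1}^k V/C(\f,U_i)$, each factor being a one-dimensional left Bernoulli shift of entropy $1$, and additivity of $\ent^*$ over this finite product yields $\ent^*(V,\f)=k$.

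For (a) $\Rightarrow$ (b) I would argue by induction on $k=\ent^*(V,\f)$. When $k=0$ the empty family works, with $C=V$. When $k\ge 1$, Corollary~\ref{cor:exist} supplies a non-stationary cocyclic cotrajectory $C(\f,U_1)$ with $\ent^*(V/C(\f,U_1),\bar\f)=1$, so the Addition Theorem forces $\ent^*(C(\f,U_1),\f\restriction)=k-1$; the inductive hypothesis applied to the restricted flow produces $k-1$ coindependent non-stationary cocyclic cotrajectories with zero-entropy intersection, to which I adjoin $C(\f,U_1)$. The \textbf{main obstacle} is the bookkeeping of the passage between the two flows: a cotrajectory produced inside $C(\f,U_1)$ must be lifted to (or recognised as coming from) a non-stationary cocyclic cotrajectory of the ambient $V$, and one must verify that adjoining $C(\f,U_1)$ preserves coindependence and that the enlarged intersection still carries $\ent^*=0$. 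This is exactly where the codimension-$1$ (cocyclic) and non-openness (non-stationary) conditions have to be propagated, presumably through a correspondence between open codimension-$1$ subspaces of $V$ and of $C(\f,U_1)$ that respects the shift action.

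For the infinite statement, the implication ($\Leftarrow$) is immediate: every finite subfamily of size $n$ produces, exactly as in (b) $\Rightarrow$ (a), a factor of $V$ of entropy $n$, so monotonicity of $\ent^*$ under factors gives $\ent^*(V,\f)\ge n$ for all $n$, whence $\ent^*(V,\f)=\infty$. For ($\Rightarrow$) I would iterate the extraction step: having found coindependent cotrajectories $C(\f,U_1),\ldots,C(\f,U_n)$, the restriction of $\f$ to their intersection still has infinite entropy (by additivity, since the complementary factor has finite entropy $n$), so Corollary~\ref{cor:exist} never fails to provide a further non-stationary cocyclic cotrajectory coindependent from the previous ones, producing a countable family. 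The same lifting-and-coindependence bookkeeping flagged above is the crux of making this recursion well defined.
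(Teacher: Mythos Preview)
Your implication (b)$\Rightarrow$(a) and your treatment of the infinite $\Leftarrow$ direction are correct and match the paper: Addition Theorem plus the product decomposition coming from coindependence (Proposition~\ref{prop:number} and Proposition~\ref{prop:cotraj coind}) give exactly the count you describe.

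The ``main obstacle'' you flag is genuine and is precisely the content of Lemma~\ref{lem:entres} in the paper: if $W\le V$ is closed, $\f$-invariant, and $\ent^*(W,\f\restriction_W)>0$, then there exists a non-stationary cocyclic $\f$-cotrajectory $C(\f,U)$ \emph{of the ambient flow} $V$ with $C(\f,U)+W=V$. The proof uses that $\B(W)=\{U\cap W:U\in\BV\}$, picks $U\in\BV$ so that $C(\f\restriction_W,U\cap W)$ is cocyclic non-stationary in $W$, observes that then $C(\f,U)$ is cocyclic non-stationary in $V$, and finally exploits that a one-dimensional left Bernoulli shift has no proper subflow of positive entropy (Example~\ref{ex:bern shift}) to force $(C(\f,U)+W)/C(\f,U)=V/C(\f,U)$. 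So your intuition about a ``correspondence between open codimension-$1$ subspaces of $V$ and of $C(\f,U_1)$'' is right, but the last step needs the completely-positive-entropy property of the Bernoulli factor, not just a bijection.

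Where your sketch actually diverges from the paper is the organisation of (a)$\Rightarrow$(b). You propose induction: descend into $C(\f,U_1)$, produce $k-1$ cotrajectories there, lift them back. Even granting the lifting lemma, this leaves you to check that $\{C_1,C_2,\dots,C_k\}$ is coindependent in $V$; the cases $i\ge 2$ go through from coindependence of $\{D_2,\dots,D_k\}$ in $C_1$ together with $C_i+C_1=V$, but the case $i=1$, namely $C_1+\bigcap_{j\ge2}C_j=V$, does not follow from your hypotheses without further argument. The paper sidesteps this entirely by arguing via maximality in $V$: take a maximal coindependent family $\{C_1,\dots,C_m\}$ of non-stationary cocyclic cotrajectories (necessarily $m\le k$ by Proposition~\ref{prop:number}); if $m<k$ then $\ent^*(\bigcap_i C_i,\f\restriction)>0$ by the Addition Theorem, and Lemma~\ref{lem:entres} applied with $W=\bigcap_i C_i$ produces $C_{m+1}$ with $C_{m+1}+\bigcap_{i\le m}C_i=V$, which is exactly the one relation needed to extend coindependence. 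No lifting of a whole family is required, and the problematic coindependence check never arises. Your iterative description of the infinite $\Rightarrow$ case is then exactly the paper's argument, once Corollary~\ref{cor:exist} is upgraded to Lemma~\ref{lem:entres}.
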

 In other words, a topological flow $(V,\phi)$ is ``essentially" a product of one-dimensional left Bernoulli shifts as many as $\ent^*(V,\f)$ counts. This insight of the relationship between Bernoulli shifts and non-zero entropy topological flows will lead us towards all the remaining results of the paper.
 
 \medskip
 
Here ``essentially" means that $(V,\f)$ is considered up to topological subflows of topological entropy $=0$. In Section~\ref{s:pinsker}, we detail how the vanishing of the topological (resp. algebraic) entropy  defines a torsion theory over the category of topological (resp. algebraic) flows. It is important to highlight that we cannot  provide here a module-theoretic property able to capture the vanishing of the topological entropy even if such module-theoretic property does exist in the algebraic case (namely, the torsion-free rank). For this reason the construction of the torsion theory over the category of topological flows cannot be made (at the moment) independent on the topological entropy itself. 

However, treating the notion of torsion axiomatically has the advantage to let us prove easily the existence of Pinsker-like constructions in our setup (see \cite{BL,DGB2} for this kind of constructions in other contexts). 
Namely, any topological flow $(V,\f)$ admits the largest factor 
$(P_{top}(V,\f),\overline\f^t)$ 
 of zero topological entropy, called {\it Pinsker factor}, and any algebraic flow $(W,\psi)$ admits the {\it Pinsker subflow} $(P_{alg}(W,\psi),\psi\restriction_a)$ that is the largest subflow of algebraic entropy $=0$. Since the torsion theories over $\LC$ and $\Vect$ turns out to be dual each other, one deduces the following relation between Pinsker factor and Pinsker subflow:
\begin{thmB} Let $(V,\f)$ be a topological flow. Then 
$$ (P_{top}(V,\phi),\overline\f^t)\cong (V/P_{alg}(\widehat V,\widehat\phi)^\perp,\overline\phi^a),$$
where $\overline\f^t\colon P_{top}(V,\phi)\to P_{top}(V,\phi)$ and $\overline\phi^a\colon V/P_{alg}(\widehat V,\widehat\phi)^\perp\to V/P_{alg}(\widehat V,\widehat\phi)^\perp$ are both induced by $\phi$. In particular,  $$D_+(V,\f)\cong_{top} P_{alg}(\widehat V,\widehat \f)^\perp\quad\text{and}\quad\widehat{D_+(V,\f)}\cong F_+(\widehat V,\widehat\f).$$
\end{thmB}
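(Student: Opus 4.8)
The plan is to deduce the isomorphism directly from the duality of the two torsion theories recorded in Section~\ref{s:pinsker}, using nothing more than the annihilator dictionary of Lefschetz duality together with the maximality built into the two Pinsker constructions. Write $P:=P_{alg}(\widehat V,\widehat\f)$ for the Pinsker subflow of the algebraic flow $(\widehat V,\widehat\f)$, i.e.\ the largest $\widehat\f$-invariant subspace of $\widehat V$ with $\ent(P,\widehat\f\restriction_P)=0$. First I would record the part of the duality I need: $S\mapsto S^\perp$ is an inclusion-reversing bijection between the closed $\f$-invariant subspaces of $V$ and the $\widehat\f$-invariant subspaces of $\widehat V$, it satisfies $(S^\perp)^\perp=S$, and it exchanges subflows with quotient flows, in the sense that $\widehat{(V/S,\overline\f)}\cong(S^\perp,\widehat\f\restriction_{S^\perp})$ and $\widehat{(S,\f\restriction_S)}\cong\widehat V/S^\perp$ as algebraic flows. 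Since $\widehat\f(P)\subseteq P$ and $f(\f v)=(\widehat\f f)(v)$ for all $f\in\widehat V$ and $v\in V$, the annihilator $P^\perp\subseteq V$ is a closed $\f$-invariant subspace, so $(V/P^\perp,\overline\f^a)$ is a genuine topological factor flow.

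The core of the argument is a two-sided maximality comparison establishing $P^\perp=N$, where $N$ is the smallest closed $\f$-invariant subspace with $P_{top}(V,\f)=(V/N,\overline\f^t)$. For the first inclusion, the dual of $V/P^\perp$ is $\widehat{(V/P^\perp)}\cong(P^\perp)^\perp=P$, which has zero algebraic entropy; because zero $\ent^*$ is the torsion-free class over $\LC$ while zero $\ent$ is the torsion class over $\Vect$, and these torsion theories are dual, a topological flow lies in the former exactly when its dual lies in the latter. Hence $\ent^*(V/P^\perp,\overline\f^a)=0$; since $P_{top}(V,\f)=V/N$ is the largest factor of zero topological entropy, every closed $\f$-invariant $S$ with $\ent^*(V/S)=0$ satisfies $N\subseteq S$, and with $S=P^\perp$ this yields $N\subseteq P^\perp$. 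For the reverse inclusion, $V/N=P_{top}(V,\f)$ has zero $\ent^*$, so by the same duality its dual $\widehat{(V/N)}\cong N^\perp$ is a $\widehat\f$-invariant subspace with $\ent(N^\perp,\widehat\f\restriction_{N^\perp})=0$; the maximality of the Pinsker subflow forces $N^\perp\subseteq P$, whence, applying $(-)^\perp$, $P^\perp\subseteq(N^\perp)^\perp=N$. Combining the two inclusions gives $N=P^\perp$, and the maps $\overline\f^t$ and $\overline\f^a$ agree because both are the endomorphism of $V/N=V/P^\perp$ induced by $\f$. This is precisely $(P_{top}(V,\f),\overline\f^t)\cong(V/P_{alg}(\widehat V,\widehat\f)^\perp,\overline\f^a)$.

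Finally, the ``in particular'' clause is a reformulation of $N=P^\perp$ through the same dictionary. Identifying, as in Section~\ref{s:pinsker}, the topological flow $D_+(V,\f)$ with the subflow $(N,\f\restriction_N)$ and the algebraic flow $F_+(\widehat V,\widehat\f)$ with the torsion-free quotient $\widehat V/P$, the equality $N=P^\perp$ gives at once $D_+(V,\f)=(P^\perp,\f\restriction_{P^\perp})\cong_{top}P_{alg}(\widehat V,\widehat\f)^\perp$, and dualizing while using $N^\perp=(P^\perp)^\perp=P$ gives $\widehat{D_+(V,\f)}\cong\widehat V/N^\perp=\widehat V/P=F_+(\widehat V,\widehat\f)$.

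I expect the main obstacle to be book-keeping rather than conceptual: the genuinely substantial input, namely that vanishing of $\ent^*$ and vanishing of $\ent$ determine torsion theories that correspond under Lefschetz duality, is supplied by Section~\ref{s:pinsker}, so here the delicate points are to verify the $\f$-invariance of the annihilators, to apply the biduality $(S^\perp)^\perp=S$ in the correct direction on each side, and to check that the two induced maps genuinely coincide, so that the identification is one of topological flows and not merely of linearly compact spaces. One should also take care to invoke only the equivalence ``zero $\ent^*$ if and only if zero $\ent$ on the dual'' and not the full equality of entropy values, so as to keep this deduction independent of the Bridge Theorem proved later.
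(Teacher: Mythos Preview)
Your proposal is correct and follows essentially the same approach as the paper: both rest on the duality of the torsion theories $(\ca T_{top},\ca F_{top})$ and $(\ca T_{alg},\ca F_{alg})$ via Corollary~\ref{cor:zero ent}, together with the annihilator dictionary \eqref{eq:dual} of Lefschetz duality. The paper compresses the deduction into a single sentence (``one deduces by \eqref{eq:dual}''), whereas you carry out explicitly the two-sided maximality comparison $N\subseteq P^\perp$ and $P^\perp\subseteq N$; this is exactly the content hidden behind the paper's appeal to Proposition~\ref{prop:torsion} and the categorical duality, so your version is a faithful unpacking rather than a different argument.
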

The $\K$-vector spaces $D_+(V,\f)$ and $F_+(\widehat V,\widehat\f)$ denote respectively the {\it domain of completely positive topological entropy of $(V,\f)$} and the {\it factor of completely positive algebraic entropy of $(\widehat V,\widehat\f)$} that will be defined in Section \ref{s:pinsker}.

Notice that the analogue \cite[Thm.~A]{DGB} of Theorem~B  in the context of Pontryagin duality  is dissimilarly deduced by the Bridge Theorem connecting the topological entropy and the algebraic entropy of abelian groups by means of Pontryagin duality. Notice that, since a version of the Bridge Theorem for locally linearly compact spaces is also available  in \cite{CGB2}, one can choose to use it in order to shorten some proofs in this paper.
However we prefer to avoid any possible application of the Bridge Theorem here in order to first make the proofs less mysterious and then provide an alternative proof of  the Bridge Theorem itself as an application of Theorem~C at the end of the last section; this important result finds some analogues in many other contexts (see, e.g., \cite{Weiss, Peters2, DGB, DGB-BT, Virili-BT}).

 We conclude the paper with the section devoted to the definition of the corank function over $\Flow(\LC)$ characterising $\ent^*$. Such a function has the purpose to measure the torsion of a topological flow $(V,\f)$, i.e., its domain of completely positive topological entropy $D_+(V,\f)$. 
 Subsequently, the corank of a topological flow $(V,\f)$ is defined to be a dimension of $(D_+(V,\f),\f\restriction_{D_+(V,\f)})$ regarded as a lattice: {\it the corank of $(V,\phi)$ is the dual Goldie dimension of the modular lattice formed by all closed $\f$-invariant subspaces of $D_+(V,\f)$}. 
 
 The idea to use the dual Goldie dimension to measure $D_+(V,\f)$ comes from  the observation that  the rank of a torsion-free module over a PID coincides with its Goldie dimension. Therefore, the authors of \cite{GBSalce} actually prove that the algebraic entropy of an algebraic flow coincides with the Goldie dimension of its factor of completely positive algebraic entropy. By the way, it is surprising how the dual Goldie dimension plays a central role in this context, since it usually does not  appear in the setting of topological modules. 

ery locally linearly compact space $V$ together with a continuous endomorphism $\phi$  corresponds univocally to a topological $\K[t]$-module ${}_\f V$, where $\K[t]$ denotes the (discrete) polynomial ring in the variable $t$. For an algebraic flow $(V,\f)$, the $\K[t]$-module ${}_\f V$ is actually discrete and we recover the well-known equivalence of categories $\Flow(\Vect)\cong{}_{\K[t]}\mathrm{Mod}.$
Finally, as a consequence of Theorem A, one proves
\begin{thmC}
For every topological flow $(V,\phi)$, one has
$$\ent^*(V,\phi)=\mathrm{cork}(V,\f).$$
\end{thmC}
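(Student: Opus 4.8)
The plan is to reduce to the domain of completely positive topological entropy and then match the dual Goldie dimension with the maximal size of a coindependent family of non-stationary cocyclic cotrajectories furnished by Theorem~A. First I would reduce to the case $V=D_+(V,\f)$: by Theorem~B the quotient $V/D_+(V,\f)$ is topologically isomorphic to the Pinsker factor $P_{top}(V,\f)$ and hence has vanishing topological entropy, so the Addition Theorem yields $\ent^*(V,\f)=\ent^*(D_+(V,\f),\f\restriction_{D_+(V,\f)})$. Since $\mathrm{cork}(V,\f)$ is by definition the dual Goldie dimension of the lattice $\caL$ of closed $\f$-invariant subspaces of $D_+(V,\f)$, it suffices to treat a flow of completely positive topological entropy; from now on I assume $V=D_+(V,\f)$, so that $\caL$ is the full lattice of closed $\f$-invariant subspaces of $V$.

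For the inequality $\mathrm{cork}(V,\f)\ge\ent^*(V,\f)$, suppose $\ent^*(V,\f)=k\in\N$ and apply Theorem~A to obtain $k$ coindependent non-stationary cocyclic $\f$-cotrajectories $C(\f,U_1),\dots,C(\f,U_k)$, each a proper member of $\caL$. Their coindependence is exactly the lattice identity $C(\f,U_i)\vee\bigcap_{j\ne i}C(\f,U_j)=V$ for every $i$, so $\caL$ carries a coindependent family of size $k$ and its dual Goldie dimension is at least $k$. The case $\ent^*(V,\f)=\infty$ is identical, using the infinite coindependent family provided by the final clause of Theorem~A.

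The reverse inequality $\mathrm{cork}(V,\f)\le\ent^*(V,\f)$ is the heart of the proof, and here the completely positive entropy hypothesis does the decisive work. I would first identify the coatoms of $\caL$: any proper $S\in\caL$ yields a nonzero factor $V/S$, which has positive entropy because $V=D_+(V,\f)$, so Corollary~\ref{cor:exist} applied to $V/S$ produces a non-stationary cocyclic cotrajectory containing $S$; conversely, no stationary (open) cocyclic cotrajectory can be a coatom, since it would give a one-dimensional zero-entropy factor, against complete positivity. Hence $\caL$ is coatomic with coatoms exactly the non-stationary cocyclic $\f$-cotrajectories. Because enlarging each member of a coindependent family to a coatom lying above it preserves all the coindependence identities (the lattice operations being monotone), the dual Goldie dimension of $\caL$ is attained on coindependent families of such coatoms; and Theorem~A forbids coindependent families of non-stationary cocyclic cotrajectories of size exceeding $\ent^*(V,\f)$. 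This gives $\mathrm{cork}(V,\f)\le\ent^*(V,\f)$, and combining the two bounds settles both the finite and the infinite cases. The step I expect to be the main obstacle is exactly this coatom identification: one must argue that complete positivity deletes every stationary coatom and renders the zero-entropy intersection $C$ of Theorem~A invisible to the dual-dimension count, so that the abstract lattice invariant is genuinely computed by the concrete Bernoulli-shift cotrajectories.
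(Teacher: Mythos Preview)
Your reduction to $V=D_+(V,\f)$ and the lower bound $\mathrm{cork}(V,\f)\ge\ent^*(V,\f)$ via Theorem~A are correct and match the paper. The obstacle you yourself flag is real, and in fact fatal as stated: the claim that $\caL=\Lambda(V,\f)$ is coatomic with coatoms exactly the non-stationary cocyclic $\f$-cotrajectories is false. Take $(V,\f)=(V_c,{}_\K\beta)$, the left one-dimensional Bernoulli shift itself. Its dual lattice is the lattice of $\beta_\K$-invariant subspaces of $V_d\cong\K[t]$ (with $t$ acting as the right shift), i.e.\ the ideal lattice of the PID $\K[t]$, which has no minimal nonzero element; hence $\Lambda(V_c,{}_\K\beta)$ has no coatoms whatsoever. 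Concretely, a non-stationary cocyclic cotrajectory $C(\f,U)$ is never maximal in $\caL$, since the Bernoulli quotient $V/C(\f,U)$ carries proper nonzero closed invariant subspaces such as $\ker({}_\K\beta^m)$. Your argument shows only that every proper $S\in\caL$ lies inside some non-stationary cocyclic cotrajectory, not that the latter is a coatom; and the ``converse'' clause about stationary cotrajectories is vacuous under complete positivity (Fact~\ref{fact:inv}(a) already forbids proper open $\f$-invariant subspaces).

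Your enlargement idea does survive if you simply delete the word ``coatom'': passing from each member of a coindependent family to a non-stationary cocyclic cotrajectory above it still preserves coindependence by monotonicity, and then Proposition~\ref{prop:number} together with Proposition~\ref{prop:cotraj coind} bounds the size of such a family by $\ent^*(V,\f)$. The paper, however, bypasses this detour entirely. Once $V=D_+(V,\f)$, every proper closed $\f$-invariant $W$ satisfies $\ent^*(V/W,\overline\f)>0$, so Proposition~\ref{prop:number} (recorded as Fact~\ref{fact:inv}(b)) gives in one line that any finite coindependent family in $\caL$ has cardinality $\le\ent^*(V,\f)$, whence $\mathrm{cork}(V,\f)\le\ent^*(V,\f)$. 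For the remaining implication $\mathrm{cork}(V,\f)=\infty\Rightarrow\ent^*(V,\f)=\infty$ the paper argues separately by contradiction, checking that the $k$ cotrajectories supplied by Theorem~A are couniform elements of $\caL$ with superfluous intersection and then invoking Proposition~\ref{prop:goldie}.
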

\section{Preliminaries}\label{s:preliminaries}
\subsection{Linearly compact vector spaces}\label{ss:lc}
Let $\K$ be a discrete field. A Hausdorff topological $\K$-space $V$ is said to be \emph{linearly topologized} if it admits a neighborhood basis at $0$ consisting of $\K$-subspaces of $V$. A linearly topologized $\K$-space $V$ is \emph{linearly compact}  if any collection of closed linear varieties of $V$ with the finite intersection property has non-empty intersection. E.g., discrete vector spaces are linearly compact as well as compact linearly topologized spaces.

Here we collect a few properties that we frequently use further on with no previous acknowledgement. 
Let $V, V'$ and $W$ be linearly topoligized $\K$-spaces such that $W\leq V$, then
\begin{enumerate}[(LC1.)]
\item if $W$ is linearly compact, then $W$ is closed in $V$;
\item if $V$ is linearly compact and $W$ is closed, then $W$ is linearly compact;
\item linear compactness is preserved by continuous homomorphisms;
\item if $V$ is discrete, then $V$ is linearly compact if and only if $V$ has finite dimension;
\item if $W$ is closed, then $V$ is linearly compact if and only if $W$ and $V/W$ are linearly compact;
\item an inverse limit of linearly compact vector spaces is linearly compact;
\item every linearly compact vector space is complete;
\item for $V$ linearly compact, any continuous homomorphism  $f\colon V\to V'$ is open onto its image;
\item the isomorphism theorems hold in $\LC$;
\item every linearly compact $\K$-space is a Tychonoff product of one-dimensional $\K$-spaces.
\end{enumerate}
A topological $\K$-vector space $V$ is  \emph{locally linearly compact} (briefly, l.l.c.\,) if it admits a neighbourhood basis at $0$ consisting of linearly compact open $\K$-subspaces. Let $\LLC$ denote the category of l.l.c. $\K$-spaces and their continuous homomorphisms.
Clearly, $\LC$ and $\Vect$ (i.e., the category of discrete $\K$-vector spaces) are full subcategories of $\LLC$.
\subsection{Lefschetz Duality}\label{ss:lefdual}
For an l.l.c.\! space $V$, the Lefschetz dual $\widehat V$ is defined to be $\mathrm{CHom}(V,\K)$ endowed with the (linearly compact)-open topology. Namely, the topology on $\widehat V$ has the family
\begin{equation}
\{A^\perp\mid A\leq V,\ A\ \text{linearly compact}\}
\end{equation}
as neighbourhood basis at $0$, where $A^\perp=\{\chi\in\mathrm{CHom(V,\mathbb K)}:\chi(A)=0\}$. Given a closed $\K$- subspace $U$ of $V$, one has
\begin{equation}\label{eq:dual}
\widehat{V/U}\cong_{top} U^\perp\quad\text{and}\quad\widehat U\cong_{top}\widehat V/U^\perp.
\end{equation}
Lefschetz dual of discrete $\K$-spaces is linearly compact, and vice-versa.
More generally,  the dual functor $$\widehat{-}:\ \LLC\to\LLC$$  is such that $\doublehat{-}\colon\LLC\,\to\, \LLC$  is naturally ismorphic to $\mathrm{id}\colon\LLC\,\to\,\LLC$.
\subsection{Topological entropy and algebraic entropy over $\LLC$}\label{ss:entropies}
Let $V$ be an l.l.c. $\K$-space and $\B(V)$ denote the set of all linearly compact open $\K$-subspaces of $V$. Inspired by the notion of topological entropy for totally disconnected locally compact groups and their endomorphisms (see \cite{DSV,GBVirili}),  the {\it topological entropy} for l.l.c. $\K$-spaces is defined to be
\begin{equation}\label{eq:top}
\ent^*(V,\f)=\sup_{U\in\BV}H^*(\f,U),\quad (V,\f)\in\textup{ob}(\Flow(\LLC)), 
\end{equation}
where $H^*(\f,U)=\lim_{n\to\infty}\frac{1}{n}\cdot\dim(U/(U\cap\f^{-1}U\cap\ldots\cap\f^{-n+1}U))$. 

Dually,  the {\it algebraic entropy} of $(V,\f)\in\textup{ob}(\Flow(\LLC))$ is given by
\begin{equation}\label{eq:alg}
\ent(V,\f)=\sup_{U\in\BV}H(\f,U),
\end{equation}
where $H(\f,U)=\lim_{n\to\infty}\frac{1}{n}\cdot\dim((U+\f U+\cdots+\f^{n-1}U)/U)$. 

Both $\ent^*$ and $\ent$ are invariants of the category  $\Flow(\LLC)$ satisfying: 
\protect{
\begin{enumerate}[P1.]
\item {\it Invariance under conjugation}: for every topological isomorphism $\alpha\colon V\to V'$ one has 
$$\ent^*(V',\alpha\phi\alpha^{-1})=\ent^*(V,\phi)\quad\text{and}\quad\ent(V',\alpha\phi\alpha^{-1})=\ent(V,\phi);$$
\item {\it Monotonicity}: for any closed linear subspace  $W$ of $V$ such that $\f(W)\leq W$ one has
 $$\ent^*(V,\f)\geq \max\{\ent^*(W,\f\restriction_W),\ent^*(V/W,\overline\f)\}\ \text{and}$$
 $$\ent(V,\f)\geq \max\{\ent(W,\f\restriction_W),\ent(V/W,\overline\f)\},$$
 where $\overline\f\colon V/W\to V/W$ is induced by $\f$;
 \item {\it Logarithmic law}: if $k\in\N$, then 
 $$\ent^*(V,\phi^k)=k \cdot \ent^*(V,\phi)\quad\text{and}\quad\ent(V,\phi^k)=k \cdot \ent(V,\phi);$$
\item {\it Continuity on inverse limits}: let $\{W_i\mid i\in I\}$ be a directed system (under inverse inclusion) of closed $\f$-invariant linear subspaces of $V$. If $V=\varprojlim V/W_i$, then $\ent^*(\phi)=\sup_{i\in I} \ent^*(\overline\phi_{W_i})$, where any $\overline\f_{W_i}\colon V/W_i\to V/W_i$ is the continuous endomorphism induced by $\f$;
\item {\it Continuity on direct limits}: let $V$ be an l.l.c.\! vector space and $\phi\colon V\to V$ a continuous endomorphism. Assume that $V$ is the direct limit of a family $\{V_i\mid i\in I\}$ of closed $\phi$-invariant linear subspaces of $V$, and let $\phi_i=\phi\restriction_{V_i}$ for all $i\in I$. Then $\ent(\phi)=\sup_{i\in I}\ent(\phi_i);$
\item {\it Vanishing property}: $\ent^*\restriction_{\Flow(\Vect)}=0\quad\text{and}\quad\ent\restriction_{\Flow(\LC)}=0;$
 \item {\it Addition Theorem}: for every closed linear subspace $W$ of $V$  such that $\f(W)\leq W$ one has
$$\ent^*(V,\f)=\ent^*(W,\f\restriction_W)+\ent^*(V/W,\overline\f),$$
$$\ent(V,\f)=\ent(W,\f\restriction_W)+\ent(V/W,\overline\f),$$
where $\overline\f\colon V/W\to V/W$ is induced by $\f$.
\end{enumerate}

Moreover, since  it has been pointed out in \cite{CGB1} that $\ent\restriction_{\Flow(\Vect)}$ coincides with the dimension entropy $\ent_{dim}$ studied in \cite{GBSalce}, throughout this paper  all the well-known results concerning $\ent_{dim}$ will be automatically considered to be properties of the algebraic entropy $\ent\restriction_{\Flow(\Vect)}$.
\begin{rem}
The main ingredient in both definitions  is the fact that  $U\cap\phi(U)$ has finite codimension in $\phi(U)$ (or, equivalently, $U \cap \phi^{-1}(U)$ has finite codimension in $U$). This is a phenomenon - giving
the name {\it $\phi$-inert} to $U$ - inspired by a series of papers of U. Dardano and coauthors (see the survey
\cite{DDR}).
\end{rem}
\begin{ex}[Bernoulli shifts]\label{ex:bernoulli} {\normalfont For the purposes of this paper, we consider here only one type of Bernoulli shift, which will be referred to as {\it 1-dimensional Bernoulli shift}.

 Let $V_c$ be the linearly compact product $\prod_{n=0}^\infty \K$. The \emph{left 1-dimensional Bernoulli shift} is 
$${}_\K\beta\colon V_c\to V_c, \quad (x_n)_{n\in\N}\mapsto (x_{n+1})_{n\in\N}.$$
Therefore, $(V_c,{}_\K\beta)$ is a topological flow and $\ent^*(V_c,{}_\K\beta)=1$.

Dually, let $V_d=\bigoplus_{n=0}^\infty \K$ be equipped with the discrete topology. Thus one defines the \emph{right  (1-dimensional) Bernoulli shift} as
$$\beta_\K\colon V_d\to V_d, \quad (x_0,x_1,\ldots)\mapsto (0,x_0,x_1,\dots).$$
In particular, 
$$\widehat{(V_c,{}_\K\beta)}=(\widehat{V_c},\widehat{{}_\K\beta})\cong (V_d,\beta_\K),$$}
and  $(V_d,\beta_\K)$ has algebraic entropy $=1$.
\end{ex}
\section{Counting left one-dimensional Bernoulli shifts}\label{s:counting}
\subsection{Coindependent families of linearly compact spaces}\label{ss:coind} Let $M$ be a module over an associative ring with unit. A finite set $\{N_i\mid i\in I\}$ of proper submodules of $M$ is said to be {\it coindependent} if $$N_i+\bigcap_{j\neq i} N_j=M\quad\forall i\in I.$$ More generally, an arbitrary set of proper submodules of $M$ is coindependent if every finite subset is.
When $I=\N$, it suffices to check coindependence of $\{N_i\mid 1\leq i\leq n\}$ for each $n$.

Given  a coindependent finite set $\{N_0,\dots,N_k\}$ of proper submodules of $M$,  one has a canonical isomorphism of modules
\begin{equation}\label{eq:fin}
\xi_k\colon\frac{M}{N_0\cap\ldots\cap N_k}\longrightarrow\prod_{i=0}^k \frac{M}{N_i},
\end{equation}
where $m+N_0\cap\ldots\cap N_k\mapsto (m+N_0,\ldots,m+N_k),$ for all $m\in M.$ When $M$ is a linearly compact $\K$-space and each $N_i$ is closed in $M$, then \eqref{eq:fin} is a topological isomorphism because of the open mapping theorem (cf. LC8. in $\S\ref{ss:lc}$). 
\begin{prop}\label{prop:number}
Let  $(V,\f)$ be a topological flow and $\{W_0,\dots,W_n\}$ a coindependent set of closed $\f$-invariant linear subspaces of $V$. For $W=W_0\cap\ldots\cap W_n$ one has $$n+1\leq\ent^*(V/W,\overline\f)\leq\ent^*(V,\f)$$ whenever $\ent^*(\overline\f^i)>0$ for all $i=1,\ldots,n$, where each $\overline\f^i\colon V/W_i\to V/W_i$ is induced by $\f$.
\end{prop}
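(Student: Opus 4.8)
The plan is to transport the computation of $\ent^*(V/W,\overline\f)$ to the finite product $\prod_{i=0}^n V/W_i$ through the canonical isomorphism of \eqref{eq:fin}, read off the lower bound from the Addition Theorem together with the positivity hypothesis, and obtain the upper bound from Monotonicity.

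First I would upgrade the topological isomorphism $\xi_n$ of \eqref{eq:fin} to a conjugacy of topological flows. Because every $W_i$ is $\f$-invariant, $\f$ descends to $\overline\f$ on $V/W$ and to $\overline\f^i$ on each $V/W_i$; a one-line diagram chase, namely $\xi_n(\overline\f(m+W))=(\f(m)+W_0,\ldots,\f(m)+W_n)=(\textstyle\prod_i\overline\f^i)(\xi_n(m+W))$, shows that $\xi_n$ intertwines $\overline\f$ with the coordinatewise endomorphism $\prod_{i=0}^n\overline\f^i$. By invariance under conjugation (P1) one then gets $\ent^*(V/W,\overline\f)=\ent^*\bigl(\prod_{i=0}^n V/W_i,\prod_{i=0}^n\overline\f^i\bigr)$.

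Next I would compute the entropy of the product flow by iterating the Addition Theorem. Each factor $V/W_i$ is linearly compact by LC5, so every partial product $V/W_0\times\cdots\times V/W_j\times\{0\}\times\cdots\times\{0\}$ is a closed $\f$-invariant subspace whose quotient is conjugate to the remaining product; hence the Addition Theorem applies at every stage and yields $\ent^*\bigl(\prod_{i=0}^n V/W_i,\prod_i\overline\f^i\bigr)=\sum_{i=0}^n\ent^*(V/W_i,\overline\f^i)$. Since $\ent^*$ is $\N\cup\{\infty\}$-valued, the positivity hypothesis forces $\ent^*(\overline\f^i)\ge 1$ for each of the factors, so the $n+1$ summands contribute at least $1$ apiece and the sum is $\ge n+1$; this gives $n+1\le\ent^*(V/W,\overline\f)$. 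The remaining inequality is immediate: $W=W_0\cap\cdots\cap W_n$ is a closed $\f$-invariant subspace of $V$, so Monotonicity yields $\ent^*(V/W,\overline\f)\le\ent^*(V,\f)$, closing the chain of inequalities.

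The conceptual content here is light, so I do not anticipate a serious obstacle. The only points requiring genuine care are the verification that $\xi_n$ is $\f$-equivariant (routine, but it is precisely what licenses replacing $V/W$ by the product) and the inductive bookkeeping of the Addition Theorem across the $n+1$ factors, where at each step one must check that the chosen subspace is closed and $\f$-invariant and that the induced quotient flow is conjugate to the next partial product. Both steps are mechanical once the conjugacy established in the first paragraph is in place.
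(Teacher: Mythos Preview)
Your proposal is correct and follows essentially the same approach as the paper's own proof: establish that $\xi_n$ conjugates $\overline\f$ with $\prod_i\overline\f^i$, then invoke the Addition Theorem (iterated across the factors), invariance under conjugation, and the fact that $\ent^*$ is $\N\cup\{\infty\}$-valued. The paper compresses all of this into a single commutative diagram and one sentence, whereas you spell out the equivariance check, the inductive bookkeeping for the Addition Theorem, and isolate Monotonicity for the upper bound; but there is no substantive difference in strategy.
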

\begin{proof}
 By \eqref{eq:fin}, the following diagram
$$\xymatrix{\frac{V}{W}\ar[r]^{\overline\f}\ar[d]_{\xi_n}&\frac{V}{W}\ar[d]^{\xi_n}\\
\prod_{i=0}^n\frac{V}{W_i}\ar[r]^{\prod\overline\f^i}&\prod_{i=0}^n\frac{V}{W_i}}$$
commute, where $\overline\f\colon V/W\to V/W$ and $\overline\f^i\colon V/W_i\to V/W_i$ (for all $i$) are induced by $\f$. Since $\ent^*$ takes values in $\N\cup\{\infty\}$, the Addition Theorem together with the invariance under conjugation yields 
$$n+1\leq\ent^*(V/W,\overline\f)\leq\ent^*(V,\f).$$
\end{proof}
\subsection{Non-stationary cocyclic $\f$-cotrajectory}
Let $(V,\f)$ be a topological flow. For $U$ linearly compact open $\K$-subspace in $V$ and $n\in\N_{> 0}$,
set 
\begin{equation}
C_n(\f,U)=U\cap\f^{-1}U\cap\ldots\cap\f^{-n+1}U\in\BV.
\end{equation}
 Then one defines the {\it $\f$-cotrajectory of $U$} to be
\begin{equation}
C(\f,U)=\bigcap_{n\in\N_{>0}} C_n(\f,U).
\end{equation}
A $\f$-cotrajectory $C(\f,U)$ is said to be {\it cocyclic} if $U$ has codimension 1 in $V$, and $C(\f,U)$ is said to be {\it non-stationary} if $C(\f,U)$ is not open in $V$, i.e., the descending chain of linearly compact open spaces
$$U\geq\cdots\geq C_n(\f,U)\geq C_{n+1}(\f,U)\geq \cdots,$$
is not stationary (see \cite[Proposition~3.6]{CGB2}). Notice that a $\f$-cotrajectory is stationary whenever there exists some $n$ such that $C_n(\f,U)= C_{n+1}(\f,U)$ by \cite[Proposition~3.2]{CGB2}.
\begin{lem}\label{lem:cod1} Let $(V,\f)$ be a topological flow and $C(\f,U)$ a non-stationary cocyclic $\f$-cotrajectory. Then
\begin{enumerate}
\item the closed subspace $\f^{-n}U$ has codimension 1 in $V$ for every $n\in N$;
\item $\mathfrak F=\{\f^{-n}U\}_{n\in\N}$ is a coindependent family in $V$;
\item the topological entropy of $\f$  with respect to $U$ is equal to 1, i.e., $H^*(\f,U)=1$.
\end{enumerate}
\end{lem}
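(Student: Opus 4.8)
The plan is to prove the three claims of Lemma~\ref{lem:cod1} in order, using the hypotheses that $C(\f,U)$ is cocyclic (so $\codim_\K U=1$) and non-stationary (so the chain $U\geq C_2(\f,U)\geq\cdots$ never stabilises). First I would establish (1). Since $U$ has codimension $1$, I want to show each $\f^{-n}U$ also has codimension $1$. The map $\overline\f\colon V/U\to V/\f^{-1}U$ induced by $\f$ (well-defined because $\f(\f^{-1}U)\leq U$) is injective, and since $\dim_\K(V/U)=1$ we get $\codim_\K \f^{-1}U\leq 1$. Equality is forced: if $\f^{-1}U=V$ then $U\supseteq\f(V)$, and one checks this makes the cotrajectory stationary (indeed $C_2=U\cap\f^{-1}U=U$), contradicting non-stationarity. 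Iterating gives $\codim_\K\f^{-n}U=1$ for all $n$ by a straightforward induction, using that $\f^{-n}U$ never equals $V$ — again because that would stabilise the chain via \cite[Proposition~3.2]{CGB2}.

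Next I would prove the coindependence in (2). By the definition in \S\ref{ss:coind} it suffices to check every finite subfamily. The natural strategy is to show, for each $n$, that the canonical map
\begin{equation*}
V/C_{n+1}(\f,U)\longrightarrow \prod_{i=0}^{n} V/\f^{-i}U
\end{equation*}
is an isomorphism, where $C_{n+1}(\f,U)=\bigcap_{i=0}^{n}\f^{-i}U$; by the isomorphism \eqref{eq:fin} this is equivalent to coindependence of $\{\f^{-i}U\mid 0\le i\le n\}$. Since each factor on the right is one-dimensional by part (1), the product has dimension $n+1$, so the claim reduces to showing $\dim_\K\bigl(V/C_{n+1}(\f,U)\bigr)=n+1$, i.e. that the codimension grows by exactly one at each step of the non-stationary chain. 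The decisive input is that the chain $C_n\geq C_{n+1}$ is \emph{strictly} descending (non-stationarity), combined with the fact — recorded just before the lemma, citing \cite[Proposition~3.2]{CGB2} — that once two consecutive terms agree the chain is constant. I would argue that each quotient $C_n/C_{n+1}$ embeds into $V/\f^{-n}U$ via $\overline\f{}^{\,n}$-type maps, hence is at most one-dimensional, and is nonzero by strictness; this pins the jump at exactly $1$ and yields $\dim_\K(V/C_{n+1})=n+1$.

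Finally, part (3) falls out as a computation once (1) and (2) are in hand. By definition $H^*(\f,U)=\lim_{n\to\infty}\frac1n\dim\bigl(U/C_n(\f,U)\bigr)$, and from the strict-by-one growth established above one gets $\dim_\K\bigl(U/C_n(\f,U)\bigr)=\codim_\K C_n - \codim_\K U = n-1$, so the limit of $\tfrac1n(n-1)$ is $1$, i.e. $H^*(\f,U)=1$.

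I expect the main obstacle to be the heart of part (2): verifying that the codimension increases by \emph{exactly} one at every stage, rather than merely being nonincreasing or growing by at most one. The upper bound (at most one) follows cleanly from the map $C_n/C_{n+1}\hookrightarrow V/\f^{-n}U$ together with part (1), and the lower bound (at least one, i.e. no collapse) is precisely the content of non-stationarity via \cite[Proposition~3.2]{CGB2}; the care lies in checking that non-stationarity really does propagate to force \emph{every} consecutive quotient to be nontrivial, and not just infinitely many of them. Once that dichotomy — either the chain is eventually constant, or it drops by exactly one forever — is nailed down, the coindependence and the entropy value $H^*(\f,U)=1$ are immediate consequences of \eqref{eq:fin} and the dimension count.
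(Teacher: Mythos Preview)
Your argument is essentially sound and close in spirit to the paper's, but there is a slip in part~(1): the map you describe goes the other way. The relation $\f(\f^{-1}U)\leq U$ makes $\f$ induce a map $V/\f^{-1}U\to V/U$, $v+\f^{-1}U\mapsto\f(v)+U$; this is injective (if $\f(v)\in U$ then $v\in\f^{-1}U$), and \emph{that} is what gives $\dim(V/\f^{-1}U)\le\dim(V/U)=1$. As you wrote it, an injection $V/U\hookrightarrow V/\f^{-1}U$ would yield the opposite inequality. The paper phrases this uniformly for all $n$ via rank--nullity: the kernel of $\pi\circ\f^n\colon V\to V/U$ is exactly $\f^{-n}U$, so $V/\f^{-n}U$ injects into $V/U$.

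For part~(2) the paper takes a slightly different route: rather than counting dimensions and invoking \eqref{eq:fin}, it appeals to the sequential criterion of \cite[Lemma~1.1]{dgd}, which says $\{\f^{-n}U\}_{n\ge0}$ is coindependent once $\f^{-n}U+C_n(\f,U)=V$ for every $n$; this is immediate from part~(1) and non-stationarity (since $C_n\not\subseteq\f^{-n}U$, and $\f^{-n}U$ has codimension~$1$). The paper then carries out your step-by-step dimension computation $\dim(C_n/C_{n+1})=1$ only in its proof of~(3). Your reorganisation---doing the dimension count first to obtain~(2), then reading off~(3)---is equally valid and arguably more self-contained, as it avoids the external reference to \cite{dgd}.
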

\begin{proof}
Let $\pi\colon V\to V/U$ be the canonical projection. For all $n\in\N$, the rank of the linear map
$\pi\circ\f^n\colon V\to V/U$ is less or equal to $\dim(V/U)$. Hence $\dim(V/\f^{-n}U)\leq 1$ since $C(\f,U)$ is cocyclic. Moreover, for $C(\f,U)$ is non-stationary,  $C_n(\f,U)\neq C_{n+1}(\f,U)$ for all $n\in\N$. Thus $\f^{-n}U\neq V$ for all $n\in\N$, and $(1)$ holds.

In order to prove  $(2)$, by \cite[Lemma~1.1]{dgd}, it suffices to show that $\f^{-n}U+C_n(\f,U)=V$ for each $n\in\N_{>0}$. This follows easily by $(1)$ and the fact that $C(\f,U)$ is non-stationary.

Let $n\in\N_{>0}$. Thus $(1)$ and $(2)$ implies
$$\dim\Big(\frac{C_n(\f,U)}{C_{n+1}(\f,U)}\Big)=\dim\Big(\frac{C_n(\f,U)+\f^{-n}U}{\f^{-n}U}\Big)=\dim\Big(\frac{V}{\f^{-n}U}\Big)=1,$$
and \cite[Proposition~3.2]{CGB2} yields (3).
\end{proof}
\begin{cor}\label{cor:exist}
Let $(V,\f)$ be a topological flow. Thus $\ent^*(\f)>0$ if, and only if,  there  exists a non-stationary cocyclic $\f$-cotrajectory.
\end{cor}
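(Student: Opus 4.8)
The plan is to treat the two implications separately, the reverse one being immediate and the forward one carrying all the content.

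For the ``if'' direction, suppose $C(\f,U)$ is a non-stationary cocyclic $\f$-cotrajectory. Then part (3) of Lemma~\ref{lem:cod1} gives $H^*(\f,U)=1$, and since $\ent^*(\f)=\sup_{U'\in\BV}H^*(\f,U')$ by definition, I conclude $\ent^*(\f)\geq 1>0$.

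For the ``only if'' direction, start from $\ent^*(\f)>0$ and choose, by the definition of $\ent^*$ as a supremum, some $U\in\BV$ with $H^*(\f,U)>0$. The first observation is that such a $U$ has a non-stationary cotrajectory: if the chain $U\geq C_2(\f,U)\geq\cdots$ stabilised, then by \cite[Proposition~3.2]{CGB2} the quantity $\dim(U/C_n(\f,U))$ would be eventually constant and hence $H^*(\f,U)=0$, a contradiction; conversely strict descent forces $\dim(C_n(\f,U)/C_{n+1}(\f,U))\geq 1$ at every step. The second observation is that $U$ has finite codimension in $V$: since $V$ is linearly compact and $U$ is open (hence closed), the quotient $V/U$ is discrete and linearly compact, so finite-dimensional by LC4, say $d=\dim(V/U)$.

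The core of the argument is then to replace $U$ by a genuinely cocyclic subspace without losing non-stationarity. I would pick $d$ hyperplanes $\overline W_1,\ldots,\overline W_d$ of the $d$-dimensional space $V/U$ with $\bigcap_{i=1}^d\overline W_i=0$, and pull them back along the projection $\pi\colon V\to V/U$ to obtain open cocyclic subspaces $W_i=\pi^{-1}(\overline W_i)\in\BV$ satisfying $U=\bigcap_{i=1}^d W_i$. Since taking $\f$-preimages commutes with finite intersections, one gets $C_n(\f,U)=\bigcap_{i=1}^d C_n(\f,W_i)$ for every $n$, and in the limit $C(\f,U)=\bigcap_{i=1}^d C(\f,W_i)$. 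Now argue by contradiction: if every $C(\f,W_i)$ were stationary, there would be $N_i$ with $C(\f,W_i)=C_{N_i}(\f,W_i)$, and putting $N=\max_i N_i$ would give $C_n(\f,U)=\bigcap_i C_n(\f,W_i)=\bigcap_i C(\f,W_i)=C(\f,U)$ for all $n\geq N$, contradicting the non-stationarity of $C(\f,U)$. Hence some $C(\f,W_j)$ is the desired non-stationary cocyclic $\f$-cotrajectory.

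The main obstacle is exactly this last reduction: a priori $\ent^*(\f)>0$ only supplies an inert open subspace $U$ of some finite codimension $d>1$, and one must manufacture from it a codimension-one subspace whose cotrajectory still fails to stabilise. The finiteness of $d$—guaranteed by the linear compactness of $V$—is what makes the $\max_i N_i$ step legitimate, and the intersection identity $C_n(\f,U)=\bigcap_i C_n(\f,W_i)$ is what transfers non-stationarity from $U$ to at least one of the $W_i$.
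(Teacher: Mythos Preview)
Your proof is correct and follows essentially the same approach as the paper: both directions match, with the forward direction decomposing $U$ as an intersection of finitely many codimension-one open subspaces and observing that non-stationarity must survive in at least one of them. The only cosmetic differences are that the paper cites \cite[Proposition~3.6]{CGB2} directly to pass from $\ent^*(\f)>0$ to a non-open cotrajectory, and that it phrases the final step as ``a finite intersection of open subspaces is open'' rather than your explicit $\max_i N_i$ argument.
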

\begin{proof} Suppose $\ent^*(\f)>0$. Thus there exists $U\in\BV$ such that $C(\f,U)$ is not open (see \cite[\protect{Proposition~3.6}]{CGB2}). For $U\in\BV$, $\dim(V/U)=n<\infty$. Let $V=U\oplus\langle v_1,\ldots,v_n\rangle$ and $U_i=U\oplus \langle v_1,\ldots,\hat v_i,\ldots,v_n\rangle\in\BV$. Thus $U=U_1\cap\ldots\cap U_n$ and $C(\f,U)=\bigcap_{i=1}^n C(\f,U_i)$. Since $C(\f,U)$ is not open, there exists $i\in\{1,\ldots,n\}$ such that $C(\f,U_i)$ is not open, i.e., $C(\f,U_i)$ is a non-stationary cocyclic $\f$-cotrajectory.

Conversely, if there exists a non-stationary cocyclic cotrajectory $C(\f,U)$, then $1=H^*(\f,U)\leq\ent^*(V,\f)$ by Lemma~ \ref{lem:cod1}(3).
\end{proof}
\begin{prop}\label{prop:cotraj coind} Let $(V,\f)$ be a topological flow.
If $C(\f,U)$ is a non-stationary cocyclic $\f$-cotrajectory of $V$, then one has a topological isomorphism
$$\theta\colon\frac{V}{C(\f,U)}\longrightarrow \prod_{n\geq 0}\frac{V}{\f^{-n}U}.$$
Moreover the induced map $\overline\f\colon V/C(\f,U)\to V/C(\f,U)$ is conjugated to a left Bernoulli shift and $\ent^*(\overline\f)=1$.
\end{prop}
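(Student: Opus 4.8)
The plan is to construct $\theta$ as the ``limit'' of the finite coindependence isomorphisms \eqref{eq:fin} and then to read off the action of $\overline\f$ in the resulting product coordinates. Throughout I would lean on the three conclusions of Lemma~\ref{lem:cod1}: each $\f^{-n}U$ is closed of codimension $1$, the family $\mathfrak F=\{\f^{-n}U\}_{n\in\N}$ is coindependent, and (for the entropy value) $H^*(\f,U)=1$. Note first that $v\in C(\f,U)=\bigcap_{n\geq 0}\f^{-n}U$ forces $\f(v)\in C(\f,U)$, so $C(\f,U)$ is $\f$-invariant and $\overline\f$ is well defined.

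First I would set $\theta(v+C(\f,U))=(v+\f^{-n}U)_{n\geq 0}$ and check it is a well-defined, continuous, injective $\K$-homomorphism: well-definedness and injectivity hold because $C(\f,U)=\bigcap_{n\geq 0}\f^{-n}U$, and continuity because each $\f^{-n}U$ is open, so every coordinate projection $V/C(\f,U)\to V/\f^{-n}U$ is continuous. Surjectivity is the first genuine point: for a target $(a_n)_n$, the fibres $\{v\in V:v+\f^{-n}U=a_n\}$ are closed linear varieties, and for each finite index set the surjectivity of the map $\xi_k$ of \eqref{eq:fin}, valid precisely because $\mathfrak F$ is coindependent, makes the corresponding finite intersection non-empty; linear compactness of $V$ then upgrades this finite intersection property to non-emptiness of the full intersection, yielding a preimage. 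Since $V$ is linearly compact, so is $V/C(\f,U)$ by (LC5.), and a continuous bijective homomorphism out of a linearly compact space is open by (LC8.); hence $\theta$ is a topological isomorphism onto $\prod_{n\geq 0}V/\f^{-n}U$, which is $\cong_{top}V_c$ since each factor is one-dimensional.

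Next I would transport $\overline\f$ across $\theta$. From $\f(\f^{-(n+1)}U)\subseteq\f^{-n}U$ one gets an induced map $\bar\f_n\colon V/\f^{-(n+1)}U\to V/\f^{-n}U$, whose kernel equals $\f^{-(n+1)}U/\f^{-(n+1)}U=0$; being an injection of one-dimensional spaces, $\bar\f_n$ is an isomorphism. Comparing $\theta\circ\overline\f$ with $(\bar\f_n)_n$ shows that in product coordinates the $n$-th coordinate of the image depends only on the $(n+1)$-st coordinate of the source, through $\bar\f_n$. To realise this as literally the left Bernoulli shift I would fix a non-zero $e_0\in V/U$, define $e_{n+1}=\bar\f_n^{-1}(e_n)$ recursively, and use these to identify every $V/\f^{-n}U$ with $\K$ via $e_n\mapsto 1$; this is exactly the identification $\prod_{n\geq 0}V/\f^{-n}U\cong_{top}V_c$ above, and under it each $\bar\f_n$ becomes the identity, so the transported map is $(x_n)_n\mapsto(x_{n+1})_n={}_\K\beta$.

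I expect the main obstacle to be the passage from the finite isomorphisms $\xi_k$ to the single topological isomorphism $\theta$ onto the \emph{infinite} product, that is, securing surjectivity and the homeomorphism property from linear compactness together with Lemma~\ref{lem:cod1}(2); once $\theta$ is in hand, identifying the action with ${}_\K\beta$ is bookkeeping with the chosen bases $\{e_n\}$. Finally $\ent^*(\overline\f)=1$ follows at once from invariance under conjugation (P1.) and $\ent^*(V_c,{}_\K\beta)=1$ from Example~\ref{ex:bernoulli}, and it is in any case forced directly by $H^*(\f,U)=1$ via the Addition Theorem.
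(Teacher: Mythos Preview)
Your proposal is correct and follows essentially the same route as the paper's proof: both use Lemma~\ref{lem:cod1} to get the coindependent family $\{\f^{-n}U\}_{n\in\N}$, both pass from the finite isomorphisms \eqref{eq:fin} to the topological isomorphism $\theta$ via linear compactness and the open mapping theorem (LC8.), and both conjugate $\overline\f$ to ${}_\K\beta$ by recursively choosing basis vectors $e_n$ along the induced isomorphisms $V/\f^{-n}U\to V/\f^{-(n-1)}U$. The only cosmetic difference is that the paper packages the surjectivity step as an inverse-limit argument (citing \cite[Lemma~1.1.7]{profinite}), while you invoke the finite intersection property for closed linear varieties directly; these are the same argument in different clothes.
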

\begin{proof} By the Lemma above, $\mathfrak F=\{\f^{-n}U\}_{n\in\N}$ is a coindependent set of closed proper linear subspaces of $V$. 
By \eqref{eq:fin}, there are continuous surjective maps
\begin{equation}
\xi_n\circ\pi_n\colon V\to\prod_{i=0}^n \frac{V}{\f^{-n}U},
\end{equation}
where $\pi_n\colon V\to V/C_{n+1}(\f,U)$ is the canonical projection.
By the universal property of inverse limit, one obtains the continuous surjective map
\begin{equation}\label{eq:inf}
\varprojlim (\xi_n\circ\pi_n)\colon V\longrightarrow\prod_{n\in\N} \frac{V}{\f^{-n}U},
\end{equation}
(see \cite[Lemma 1.1.7]{profinite}). By the open mapping theorem for linearly compact spaces, the continuous surjection \eqref{eq:inf} gives rise to the following topological isomorphism
\begin{equation}\label{eq:iso}
\theta\colon\frac{V}{C(\f,U)}\longrightarrow \prod_{n\geq 0}\frac{V}{\f^{-n}U},\quad v+C(\f,U)\mapsto (v+\f^{-n}U)_{n\in\N}.
\end{equation}
Let $e_0$ be a non-zero vector of $V/U$. Since $C(\f,U)$ is non-stationary and cocyclic, $\f$ induces an isomorphism $\varphi_n$ from $V/\f^{-n}U$ to $V/\f^{-n+1}U$ for all $n\in\N$. Thus, for $n\in \N$, let $e_{n}\in V/\f^{-n}U$ such that $\varphi_n(e_{n})=e_{n-1}$. Let $\beta$ be the left Bernoulli shift of $\prod_{n\in\N} \K e_n$. As the diagram
\begin{equation}
\xymatrix{
\frac{V}{C(\f,U)}\ar[r]^-\theta\ar[d]_{\overline\f}&\prod_{n\in\N} \frac{V}{\f^{-n}U}\ar[r]^\cong&\prod_{n\in\N} \K e_n\ar[d]^\beta\\
\frac{V}{C(\f,U)}\ar[r]^-\theta&\prod_{n\in\N} \frac{V}{\f^{-n}U}\ar[r]^\cong&\prod_{n\in\N} \K e_n}
\end{equation}
commutes, $\overline\f$ is topologically conjugated to a left Bernoulli shift.
By \cite[Prop. 3.11(a), Ex. 3.13(a)]{CGB2}, $\ent^*(V/C(\f,U),\overline\f)=1$, and this concludes the proof.
\end{proof}
\begin{rem} For an arbitrary module $M$ is not true that if $M$ has an infinite coindependent set of proper submodules, then $M$ has an homomorphic image given by a direct product of infinitely many non-zero modules. E.g., let $\Z$ be the abelian group of integers. Then $\{p\Z\mid p\ \text{prime}\}$ is an infinite coindependent set of proper $\Z$-submodules, but there is no homomorphic image of $\Z$ onto a direct product of infinitely many non-zero abelian groups. Therefore, the linear compactness is an essential ingredient in the proof of Proposition \ref{prop:cotraj coind}.
\end{rem}
\begin{cor}\label{cor:zero ent} Let $(V,\f)$ be a topological flow. Thus $\ent^*(V,\f)=0$ if, and only if, $\ent(\widehat V,\widehat\f)=0$.\end{cor}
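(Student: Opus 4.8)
The plan is to prove the equivalent positivity statement $\ent^*(V,\f)>0\iff\ent(\widehat V,\widehat\f)>0$, keeping the two entropies strictly apart so as neither to invoke nor to reprove the full Bridge Theorem: only the \emph{positivity} of each invariant will be transported across Lefschetz duality, and the vehicle will be the interplay between the left and right $1$-dimensional Bernoulli shifts, together with the fact that $\widehat{-}$ exchanges closed subspaces with quotients via $\perp$.

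For the forward implication, assume $\ent^*(V,\f)>0$. By Corollary~\ref{cor:exist} there is a non-stationary cocyclic $\f$-cotrajectory $C=C(\f,U)$, which is a closed $\f$-invariant subspace (indeed $\f(C)\subseteq C$ follows at once from $C=\bigcap_{k\ge 0}\f^{-k}U$). By Proposition~\ref{prop:cotraj coind} the induced flow $(V/C,\overline\f)$ is topologically conjugate to the left Bernoulli shift $(V_c,{}_\K\beta)$. Now I would dualize: since $V$ is linearly compact, $\widehat V$ is discrete, and by \eqref{eq:dual} one has $\widehat{V/C}\cong_{top}C^\perp$, a closed $\widehat\f$-invariant subspace of $\widehat V$ on which, by functoriality of $\widehat{-}$, the map $\widehat\f$ restricts to the dual of $\overline\f$. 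By Example~\ref{ex:bernoulli} the dual of the left Bernoulli shift is the right Bernoulli shift $(V_d,\beta_\K)$, whose algebraic entropy equals $1$. Hence the Monotonicity property of $\ent$ gives $\ent(\widehat V,\widehat\f)\ge\ent(C^\perp,\widehat\f\restriction_{C^\perp})=1>0$.

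For the converse, assume $\ent(\widehat V,\widehat\f)>0$. Here $(\widehat V,\widehat\f)$ is an algebraic flow, so $\ent$ restricts to the dimension entropy $\ent_{dim}$, and I would invoke its known description via the torsion-free rank over $\K[t]$: positivity of $\ent(\widehat V,\widehat\f)$ forces the $\K[t]$-module ${}_{\widehat\f}\widehat V$ to have positive torsion-free rank, hence to contain a free $\K[t]$-submodule of rank one. Such a submodule is a $\widehat\f$-invariant subspace $W'\le\widehat V$ on which $\widehat\f$ is conjugate to the right Bernoulli shift $(V_d,\beta_\K)$. Dualizing once more, \eqref{eq:dual} turns the inclusion $W'\le\widehat V$ into a quotient $\widehat{W'}\cong_{top}\widehat V/(W')^\perp\cong V/(W')^\perp$, and the dual of the right Bernoulli shift is the left Bernoulli shift $(V_c,{}_\K\beta)$ with $\ent^*=1$. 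Thus $(V/(W')^\perp,\overline\f)$ is conjugate to a left Bernoulli shift, and the Monotonicity property of $\ent^*$ yields $\ent^*(V,\f)\ge\ent^*(V/(W')^\perp,\overline\f)=1>0$.

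The routine part is the duality bookkeeping (that $\widehat{-}$ swaps closed subspaces with quotients by $\perp$, is compatible with the induced maps, and interchanges the two Bernoulli shifts). The one genuine input, and the point I expect to be the main obstacle, lies in the converse: the excerpt's machinery is asymmetric, since Corollary~\ref{cor:exist} characterises only $\ent^*>0$, so I must import the positivity characterisation of $\ent_{dim}$ on the discrete dual. The delicate issue is to use \emph{only} the positivity of the torsion-free rank — equivalently, the mere existence of a right-Bernoulli subflow — and never its numerical value; in this way the dimension count that would amount to the full Bridge Theorem is genuinely avoided here and reserved for the final section.
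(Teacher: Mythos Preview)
Your proof is correct and follows essentially the same approach as the paper's own proof. The paper argues the forward implication by invoking Proposition~\ref{prop:cotraj coind} to produce a left-Bernoulli factor, dualizing to a right-Bernoulli subflow of $(\widehat V,\widehat\f)$, and applying monotonicity; for the converse it cites \cite[Theorem~4.3]{GBSalce} to extract an infinite-dimensional $\widehat\f$-trajectory (equivalently, your free rank-one $\K[t]$-submodule) and then dualizes back---exactly the route you take, only with less explicit bookkeeping on the $\perp$-correspondence.
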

\begin{proof} Suppose $\ent^*(V,\f)>0$. By the latter proposition, there exists a factor of $V$ acted on by $\overline\f$ as left Bernoulli shift. By Lefschetz duality, $\widehat V$ admits a $\widehat\f$-invariant linear subspace acted on by the restriction of $\widehat\f$ as right Bernoulli shift (see Example \ref{ex:bernoulli}). By the monotonicity of $\ent$, one concludes that $1\leq\ent(V,\f)$. 

Vice-versa, suppose $\ent(\widehat V,\widehat\f)>0$. By \cite[Theorem 4.3]{GBSalce},  $\widehat V$ admits a $\widehat\f$-invariant linear subspace acted on by $\widehat\f$ as right Bernoulli shift (namely an infinite-dimensional $\f$-trajectory). A similar reasoning as above concludes the proof.\end{proof}
\begin{cor}
Let $(V,\f)$ be a topological flow. Thus $\ent^*(\f)=1$ if, and only if, there exists a non-stationary cocyclic $\f$-cotrajectory $C(\f,U)$ such that $\ent^*(\f\restriction_{C(\f,U)})=0$.
\end{cor}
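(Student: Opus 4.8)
The plan is to reduce both implications to a single application of the Addition Theorem, once we know that the factor $V/C(\f,U)$ always contributes exactly $1$ to the topological entropy. First I would record the structural fact that makes the Addition Theorem applicable: for any linearly compact open $U\in\BV$, the $\f$-cotrajectory $C(\f,U)=\bigcap_{n\geq 0}\f^{-n}U$ is a closed linear subspace of $V$ (being an intersection of the closed subspaces $C_n(\f,U)$) and it is $\f$-invariant, since $x\in C(\f,U)$ forces $\f^n(\f x)=\f^{n+1}x\in U$ for every $n\geq 0$, whence $\f(C(\f,U))\leq C(\f,U)$. Thus $C=C(\f,U)$ is an admissible subspace for the Addition Theorem, giving the identity
$$\ent^*(V,\f)=\ent^*(C,\f\restriction_C)+\ent^*(V/C,\overline\f),$$
where $\overline\f\colon V/C\to V/C$ is induced by $\f$.

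For a non-stationary cocyclic cotrajectory $C=C(\f,U)$, Proposition~\ref{prop:cotraj coind} tells us that $\overline\f$ is topologically conjugated to a left Bernoulli shift and that $\ent^*(V/C,\overline\f)=1$. Substituting this into the displayed identity yields the master equation
$$\ent^*(V,\f)=\ent^*(C,\f\restriction_C)+1$$
whenever $C$ is a non-stationary cocyclic $\f$-cotrajectory. Both directions of the corollary fall out of this single equation.

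For the implication assuming the existence of such a $C$ with $\ent^*(\f\restriction_C)=0$, the master equation immediately gives $\ent^*(V,\f)=0+1=1$. For the converse, assuming $\ent^*(V,\f)=1$, I would first invoke Corollary~\ref{cor:exist}: since $\ent^*(\f)>0$, there exists at least one non-stationary cocyclic $\f$-cotrajectory $C=C(\f,U)$. Applying the master equation to this very $C$ gives $1=\ent^*(C,\f\restriction_C)+1$, forcing $\ent^*(C,\f\restriction_C)=0$, which is exactly the cotrajectory required.

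Given the machinery already established, there is no serious analytic obstacle here; the only point demanding care is the verification that the cotrajectory is closed and $\f$-invariant so that the Addition Theorem genuinely applies, together with the observation that the forward direction uses \emph{the same} cotrajectory furnished by Corollary~\ref{cor:exist} rather than producing a new one. The conceptual content is entirely carried by Proposition~\ref{prop:cotraj coind} (the Bernoulli factor contributes $1$) and the additivity of $\ent^*$; everything else is bookkeeping.
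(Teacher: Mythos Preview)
Your proof is correct and matches the paper's approach: the paper states this corollary without proof, implicitly relying on exactly the combination of Corollary~\ref{cor:exist}, Proposition~\ref{prop:cotraj coind}, and the Addition Theorem that you spell out. Your extra care in verifying that $C(\f,U)$ is closed and $\f$-invariant (so that the Addition Theorem applies) is appropriate and fills in a detail the paper leaves tacit.
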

\subsection{Proof of Theorem A}\label{ss:a} Now we are in position to prove Theorem~A that can be considered to be  dual of \cite[Theorem 4.7]{GBSalce}, where the algebraic entropy of an algebraic flow is expressed in terms of right Bernoulli shifts. We just need one more technical lemma.
\begin{lem}\label{lem:entres} Let $(V,\f)$ be a topological flow and $W$ a closed $\f$-invariant linear subspace of $V$. If $\ent^*(W,\f\restriction_W)>0$,  then there exists a non-stationary cocyclic $\f$-cotrajectory $C(\f,U)$ such that $C(\f,U)+W=V$.
\end{lem}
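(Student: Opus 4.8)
The plan is to transfer a witnessing cotrajectory from $W$ up to $V$ by extending a single linear functional, and then to use the product decomposition of Proposition~\ref{prop:cotraj coind} to force the transversality $C(\f,U)+W=V$. To begin, note that $W$, being closed in the linearly compact space $V$, is itself linearly compact (LC2), so $(W,\f\restriction_W)$ is again a topological flow. Since $\ent^*(W,\f\restriction_W)>0$, Corollary~\ref{cor:exist} supplies a linearly compact open subspace $U'$ of $W$ with $\dim_\K(W/U')=1$ whose cotrajectory $C(\f\restriction_W,U')$ is non-stationary in $W$. Writing $U'=\ker\chi'$ for a continuous functional $\chi'\in\widehat W$, I would extend $\chi'$ to some $\chi\in\widehat V$ using that the restriction map $\widehat V\to\widehat W$ is surjective, which follows from the isomorphism $\widehat W\cong_{top}\widehat V/W^\perp$ of \eqref{eq:dual}. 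Setting $U=\ker\chi$, one gets a closed hyperplane of $V$; as $V/U$ is a one-dimensional linearly compact space it is discrete (LC4), so $U$ is open and, being closed in $V$, linearly compact (LC2). Thus $U\in\BV$ is cocyclic and, by construction, $U\cap W=\ker(\chi\restriction_W)=U'$.

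Next I would compare the two cotrajectories. Because $W$ is $\f$-invariant, for every $n$ one checks the elementary identity $\f^{-n}U\cap W=(\f\restriction_W)^{-n}U'$, whence $C(\f,U)\cap W=C(\f\restriction_W,U')$. If $C(\f,U)$ were open in $V$, its trace on $W$ would be open in $W$, contradicting the non-stationarity secured above; hence $C(\f,U)$ is a non-stationary cocyclic $\f$-cotrajectory. Lemma~\ref{lem:cod1}(1) then applies on both sides, giving $\dim_\K(V/\f^{-n}U)=1$ and $\dim_\K(W/(\f\restriction_W)^{-n}U')=1$ for all $n$, so the inclusion $W\hookrightarrow V$ induces isomorphisms $W/(\f\restriction_W)^{-n}U'\xrightarrow{\ \cong\ }V/\f^{-n}U$ for each $n$.

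The crux is the transversality. Applying Proposition~\ref{prop:cotraj coind} to $(W,\f\restriction_W)$ yields a topological isomorphism $W/C(\f\restriction_W,U')\cong\prod_{n\geq0}W/(\f\restriction_W)^{-n}U'$, and composing with the componentwise isomorphisms of the previous paragraph shows that the map $w\mapsto(w+\f^{-n}U)_n$ sends $W$ onto $\prod_{n\geq0}V/\f^{-n}U$. Under the isomorphism $\theta$ of Proposition~\ref{prop:cotraj coind} for $(V,\f)$ this product is identified with $V/C(\f,U)$, and the displayed map is exactly the composite $W\hookrightarrow V\to V/C(\f,U)$. Its surjectivity is precisely the desired equality $C(\f,U)+W=V$.

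The functional extension and the codimension bookkeeping are routine, resting on Lefschetz duality and the open mapping theorem for linearly compact spaces. I expect the main obstacle to be the last step: the surjectivity of each individual map $W\to V/\f^{-n}U$ does \emph{not} by itself yield surjectivity onto the infinite product $\prod_{n\geq0}V/\f^{-n}U$, and it is genuinely the coindependence and product structure encoded in Proposition~\ref{prop:cotraj coind} — and ultimately the linear compactness of $V$ — that upgrades the pointwise statement to the global one.
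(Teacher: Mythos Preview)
Your argument is correct and follows the paper's up to the final transversality step, where the two diverge. The paper, having identified both $V/C(\f,U)$ and $Z:=(C(\f,U)+W)/C(\f,U)\cong W/C(\f\restriction_W,U')$ with left $1$-dimensional Bernoulli shifts via Proposition~\ref{prop:cotraj coind}, then invokes the fact (a forward reference to Example~\ref{ex:bern shift}) that a Bernoulli shift admits no proper subflow of positive topological entropy; since $\ent^*(Z)=1$, this forces $Z=V/C(\f,U)$. Your route is more direct: you observe that the inclusion $W\hookrightarrow V$ induces an isomorphism on each factor $W/(\f\restriction_W)^{-n}U'\to V/\f^{-n}U$ (both $1$-dimensional, kernel trivially zero because $W\cap\f^{-n}U=(\f\restriction_W)^{-n}U'$), hence an isomorphism on the products, and then Proposition~\ref{prop:cotraj coind} applied to \emph{both} flows identifies this product map with $W/C(\f\restriction_W,U')\to V/C(\f,U)$. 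Your worry in the last paragraph is well placed but already addressed by your own argument: you are not deducing surjectivity onto the product from surjectivity onto each factor, but rather from the surjection $W\twoheadrightarrow\prod_n W/(\f\restriction_W)^{-n}U'$ supplied by Proposition~\ref{prop:cotraj coind} composed with a product of isomorphisms. The net effect is that your proof is self-contained within \S\ref{s:counting}, whereas the paper's relies on a forward reference it explicitly flags as having ``the appearance of a circular argument''; the paper's version, on the other hand, isolates a structural fact about the Bernoulli shift that is of independent interest.
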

\begin{proof} Firstly, notice that  $\B(W)=\{U\cap W\mid U\in\BV\}$. 
Since $0<\ent^*(W,\f\restriction_W)$, by Corollary~\ref{cor:exist}, there exists $U\in\BV$ such that   $C(\f\restriction_W, U\cap W)=C(\f,U)\cap W$ is non-stationary  and cocyclic. Therefore, $C(\f,U)$ is also cocyclic and non-stationary. In particular, both
the topological flows $(V/C(\f,U),\overline\f)$ and $(W/C(\f\restriction_W,U\cap W),\overline{\f\restriction_W})$ can be regarded as left 1-dimensional Bernoulli shift. Now, the topological flow $(Z=(C(\f,U)+W)/C(\f,U),(\overline\f)\restriction_Z)$ is isomorphic to $(W/C(\f\restriction_W,U\cap W),),\overline{\f\restriction_W})$ by LC8; namely, the following diagram
\begin{equation}
\xymatrix{Z=\frac{C(\f,U)+W}{C(\f,U)}\ar[d]_{(\overline\f)\restriction_Z}\ar[r]^\cong & \frac{W}{C(\f\restriction_W,U\cap W)}\ar[d]^{\overline{\f\restriction_W}}\\
Z=\frac{C(\f,U)+W}{C(\f,U)}\ar[r]_\cong& \frac{W}{C(\f\restriction_W,U\cap W)}}
\end{equation}
commutes. Thus
 $\ent^*(Z,(\overline\f)\restriction_Z)=1$, since $\ent^*(W/C(\f\restriction_W,U\cap W),),\overline{\f\restriction_W})=1$. Finally, by Example~\ref{ex:bern shift}, 
$V/C(\f,U)= Z,$ i.e., $C(\f,U)+W=V$.
\end{proof}
\begin{rem} The reference to Example~\ref{ex:bern shift} could create the appearance of a 
circular argument  but requiring the reader read forward only once simplifies the organization of the paper.
\end{rem}
\begin{thmA}
Let $(V,\f)$ be a topological flow. Then the following are equivalent:
\begin{itemize}
\item[a)] the topological entropy $\ent^*(\f)=k\in\N$,
\item[b)] there exist $k\in\N$ coindependent non-stationary cocyclic $\f$-cotrajectories $\{C(\f,U_i)\mid i=1,\ldots,k\}$ such that $\ent^*(\f\restriction_C)=0$ where $C=C(\f,U_1)\cap\ldots\cap C(\f,U_k)$
\end{itemize}
Moreover, $\ent^*(V,\f)=\infty$ if, and only if, there exists an infinite (countable) coindependent family of non-stationary cocyclic $\f$-cotrajectories.
\end{thmA}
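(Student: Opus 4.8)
The plan is to establish the equivalence $(a)\Leftrightarrow(b)$ directly and then recover the statement about $\ent^*(V,\f)=\infty$ from the same construction. The implication $(b)\Rightarrow(a)$ is the more transparent one. Granting the $k$ coindependent non-stationary cocyclic cotrajectories $\{C(\f,U_i)\}$ with $C=\bigcap_i C(\f,U_i)$ and $\ent^*(\f\restriction_C)=0$, I would first apply the Addition Theorem with the closed $\f$-invariant subspace $W=C$ to get $\ent^*(V,\f)=\ent^*(V/C,\overline\f)$. Then, by the canonical isomorphism \eqref{eq:fin} for the coindependent family, $V/C$ is flow-isomorphic to the finite product $\prod_{i=1}^k V/C(\f,U_i)$ on which $\overline\f$ acts as the product of the induced maps; since each factor has entropy $1$ by Proposition~\ref{prop:cotraj coind}, iterating the Addition Theorem across the $k$ factors gives $\ent^*(V/C,\overline\f)=k$, hence $\ent^*(V,\f)=k$.

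For $(a)\Rightarrow(b)$ I would argue by induction on $k$, peeling off one Bernoulli factor per step (the case $k=0$ being vacuous, with empty family and $C=V$). Assuming $\ent^*(\f)=k\geq 1$, I build closed $\f$-invariant subspaces $V=C_0\supseteq C_1\supseteq\cdots\supseteq C_k$ and cotrajectories $C(\f,U_1),\dots,C(\f,U_k)$ with $C_i=C_{i-1}\cap C(\f,U_i)$ and $\ent^*(\f\restriction_{C_i})=k-i$. At the $i$-th step, since $\ent^*(\f\restriction_{C_{i-1}})=k-i+1>0$, Lemma~\ref{lem:entres} applied to $W=C_{i-1}$ furnishes a non-stationary cocyclic $\f$-cotrajectory $C(\f,U_i)$ of $V$ with $C(\f,U_i)+C_{i-1}=V$. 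The isomorphism theorems (LC9) then yield a flow isomorphism $C_{i-1}/C_i\cong V/C(\f,U_i)$, whose right-hand side is a left Bernoulli shift of entropy $1$ by Proposition~\ref{prop:cotraj coind}; so the Addition Theorem inside $C_{i-1}$ gives $\ent^*(\f\restriction_{C_i})=\ent^*(\f\restriction_{C_{i-1}})-1=k-i$. After $k$ steps $\ent^*(\f\restriction_{C_k})=0$, and $C=C_k$ is the required intersection.

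The delicate point, which I expect to be the main obstacle, is checking that the whole family $\{C(\f,U_1),\dots,C(\f,U_k)\}$ remains coindependent, since Lemma~\ref{lem:entres} only supplies the pairwise relation $C(\f,U_i)+C_{i-1}=V$. I would resolve this via the equivalence between coindependence of a closed family and the canonical map of \eqref{eq:fin} being an isomorphism: injectivity of that map is automatic, and its surjectivity is exactly the simultaneous-solvability condition defining coindependence. Inductively assume a canonical isomorphism $V/C_{i-1}\cong\prod_{l<i}V/C(\f,U_l)$. The relation $C(\f,U_i)+C_{i-1}=V$ gives, again by \eqref{eq:fin}, a canonical isomorphism $V/C_i\cong V/C(\f,U_i)\times V/C_{i-1}$; composing with the inductive isomorphism on the second factor produces, up to a permutation of coordinates, the canonical map $V/C_i\to\prod_{l\leq i}V/C(\f,U_l)$, which is therefore an isomorphism, so the enlarged family is coindependent. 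I would isolate this bookkeeping as a short standalone lemma so it can be reused verbatim below.

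Finally, the infinite case runs on the same engine. If a countable coindependent family of non-stationary cocyclic cotrajectories exists, each finite subfamily of size $n$ forces $n\leq\ent^*(V,\f)$ by Proposition~\ref{prop:number} (every factor having positive entropy by Proposition~\ref{prop:cotraj coind}), whence $\ent^*(V,\f)=\infty$. Conversely, if $\ent^*(V,\f)=\infty$, the inductive construction never terminates: at every stage $\ent^*(\f\restriction_{C_{i-1}})=\infty>0$, so Lemma~\ref{lem:entres} keeps producing new cotrajectories, the coindependence argument of the previous paragraph keeps the growing family coindependent, and one obtains the desired infinite (countable) coindependent family.
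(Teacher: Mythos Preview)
Your proof is correct and uses the same key ingredients as the paper (Lemma~\ref{lem:entres}, Proposition~\ref{prop:cotraj coind}, Proposition~\ref{prop:number}, and the Addition Theorem), but the organization of the implication $(a)\Rightarrow(b)$ differs. The paper argues non-constructively: it observes that any coindependent family of non-stationary cocyclic cotrajectories has size at most $k$, takes a \emph{maximal} such family of size $m\leq k$, and derives a contradiction from $m<k$ by applying Lemma~\ref{lem:entres} once to enlarge it. You instead build the family explicitly by a descending induction, peeling off one Bernoulli factor at each stage and tracking the residual entropy $\ent^*(\f\restriction_{C_i})=k-i$. Your route has the advantage of being constructive and of making the coindependence bookkeeping fully explicit (the paper leaves the step ``$C_{m+1}+C=V$ implies $\{C_1,\dots,C_{m+1}\}$ coindependent'' to the reader, relying implicitly on the same criterion you spell out); the paper's maximality argument is shorter and avoids the need to subtract entropies, which in principle requires knowing the values are finite. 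For the infinite case the two arguments coincide.
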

\begin{proof}[Proof of Theorem~A] By Corollary~\ref{cor:exist}, we can assume $k>0$.  By the Addition Theorem, Proposition~\ref{prop:number} and Proposition~\ref{prop:cotraj coind}, b) implies a).

  Now, since $k>0$, there is at least one non-stationary cocyclic $\f$-cotrajectory. By Proposition~\ref{prop:number} and Proposition~\ref{prop:cotraj coind}, every finite set of coindependent non-stationary cocyclic $\f$-cotrajectories has cardinality less or equal to $k$.   
  Let $m$ be the maximal number of coindependent non-stationary cocyclic $\f$-cotrajectories, thus $1\leq m\leq k$.

Now suppose $m<k$ and let $\{C_i\mid 1\leq i\leq m\}$ be a maximal family of coindependent non-stationary cocyclic $\f$-cotrajectories. By the Addition Theorem, $\ent^*(\f\restriction_C)>0$, where $C=C_1\cap\ldots\cap C_m$. Applying Lemma~\ref{lem:entres} yields a non-stationary cocyclic $\f$-trajectory $C_{m+1}$ such that $C_{m+1}+C=V$, and this contradicts the maximality of $m$, i.e., a) implies b).

Finally, let $\ent^*(V,\f)=\infty$. Let $C_1$ be a non-stationary cocyclic $\f$-cotrajectory (see Corollary \ref{cor:exist}). Since $\ent^*(V/C_1,\overline\f)=1$, the Addition Theorem yields $\ent^*(C_1,\f\restriction_{C_1})=\infty$. By Lemma \ref{lem:entres}, there exists a non-stationary cocyclic $\f$-cotrajectory $C_2$ such that $\{C_1,C_2\}$ is coindependent. Repeating the procedure at each step yields the claim. Conversely, since every finite subfamily of a countable coindependent family is coindependent as well, Proposition~\ref{prop:number} concludes the proof.
\end{proof}
\section{Torsion theories and Pinsker-like constructions}\label{s:pinsker}
\subsection{Torsion theory for abelian categories}\label{ss:torsion}
Following \cite{torsion}, a {torsion theory} for a subcomplete abelian category $\C$ consists of a couple $\tau=(\ca T,\ca F)$ of classes of objects of $\C$ satisfying 
\begin{itemize}
\item[(I)] $\ca T\cap\ca F =\{0\}$.
\item[(II)] If $T\to A\to 0$ is exact with $T\in\ca T$, then $A\in \ca T$.
\item[(III)] If $0\to A\to F$ is exact with $F\in\ca F$, then $A\in\ca F$.
\item[(IV)] For each object $X$ of $\C$ there is an exact sequence
$0\to T\to X\to F\to 0,$
with $T\in\ca T$ and $F\in\ca F$.
\end{itemize}
Thus $\ca T$ is a {\it torsion class} and $\ca F$ is a {\it torsion-free class} of $\C$.  If both classes $\ca T$ and $\ca F$ satisfy (IV) and contain with any object all its isomorphic images, then axioms (I) through (III) can be replaced by the {\it ortogonality axiom}
\begin{itemize}
\item[(V)] $\Hom(T,F)=0,\ \forall T\in\ca T, F\in\ca F$.
\end{itemize}
In particular, if $\ca T$ and $\ca F$ are complete with respect to the orthogonality axiom - i.e., if an object $T$ of $\C$ is such that $\Hom(T, F)=0$ for all $F\in\ca F$, then $T\in\ca T$; and the other way round - then $\tau=(\ca T,\ca F)$ is a torsion theory (cf. \cite[Thm. 2.1]{torsion}). For further applications we state here the following result.
\begin{prop}[\protect{\cite[Proposition 2.4]{torsion}}]\label{prop:torsion}
Let $\tau=(\ca T,\ca F)$ be a torsion theory of $\C$, and let $M\in ob(\C)$. Then there is a unique largest subobject $M_t$ of $M$ which is a member of $\ca T$. Moreover, $M/M_t\in\ca F$. The object $M_t$ can be calculated by either of the equalities
\begin{enumerate}
\item $M_t=\bigcup\{T\subseteq M\mid T\in\ca T\}$, or
\item $M_t=\bigcap\{S\subseteq M\mid M/S\in\ca F\}.$
\end{enumerate}
\end{prop}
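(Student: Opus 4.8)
The plan is to build everything from axiom (IV), which already hands us a distinguished torsion subobject, and then to show that this subobject is forced to be the largest by the orthogonality between $\ca T$ and $\ca F$. Concretely, applying (IV) to $M$ yields an exact sequence $0\to T\to M\to F\to 0$ with $T\in\ca T$ and $F\in\ca F$; I would simply set $M_t:=T$, so that $M/M_t\cong F\in\ca F$ is immediate. Everything then reduces to proving that $T$ contains every torsion subobject of $M$ and that it coincides with the two displayed expressions.

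For the maximality, I would take an arbitrary subobject $T'\subseteq M$ with $T'\in\ca T$ and examine the composite $T'\hookrightarrow M\twoheadrightarrow M/T=F$. Its image $I$ is a quotient of $T'$, hence lies in $\ca T$ by (II), and is at the same time a subobject of $F$, hence lies in $\ca F$ by (III). Since $\ca T\cap\ca F=\{0\}$ by (I), we get $I=0$; that is, $T'$ maps to zero in $M/T$ and therefore $T'\subseteq T$. This shows that $T$ is the largest torsion subobject, and uniqueness is automatic because any two largest subobjects contain each other.

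The two formulas then follow by the same orthogonality bookkeeping. For the union, each torsion subobject is contained in $M_t$ while $M_t$ is itself torsion, so $M_t=\bigcup\{T\subseteq M\mid T\in\ca T\}$; here the subcompleteness of $\C$ is what guarantees that this union exists as a subobject of $M$. For the intersection, I would first note that $M_t$ is itself one of the $S$ occurring on the right, since $M/M_t\in\ca F$, giving $\bigcap\{S\}\subseteq M_t$; conversely, for any $S$ with $M/S\in\ca F$ the composite $M_t\to M\to M/S$ is a morphism from an object of $\ca T$ to one of $\ca F$, hence $\Hom(M_t,M/S)=0$ forces it to vanish, so $M_t\subseteq S$ and therefore $M_t\subseteq\bigcap\{S\subseteq M\mid M/S\in\ca F\}$.

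The only genuinely delicate points, rather than true obstacles, are the two places where the ambient structure enters: forming the arbitrary union and the arbitrary intersection of subobjects requires exactly the hypothesis that $\C$ is subcomplete, and the recurring ``the image is simultaneously torsion and torsion-free, hence zero'' step must be stated so that it applies verbatim both to quotients, via (II), and to subobjects, via (III). Once the orthogonality axiom (V) is taken as the common packaging of (I)--(III), both the maximality argument and the intersection formula collapse to a single one-line computation, so I expect no real difficulty beyond this abstract-nonsense verification.
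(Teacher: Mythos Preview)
Your argument is correct and is essentially the standard proof of this fact. Note, however, that the paper does not supply its own proof of this proposition: it is quoted verbatim from Dickson's paper \cite[Proposition~2.4]{torsion} and used as a black box. So there is no ``paper's proof'' to compare against beyond the original reference, whose argument your write-up reproduces faithfully.

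One minor remark: your appeal to subcompleteness for the union and intersection formulas is not actually needed once you have already produced $M_t$ via axiom (IV). Since $M_t$ contains every torsion subobject and is itself torsion, it \emph{is} the join of the family $\{T\subseteq M\mid T\in\ca T\}$; likewise $M_t$ is itself a member of $\{S\subseteq M\mid M/S\in\ca F\}$ and is contained in every such $S$, so it \emph{is} the meet. The existence of these lattice operations is thus a consequence of your argument, not a hypothesis for it.
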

Hence any torsion theory over $\C$ is uniquely determined by its torsion class: $$\ca F=\{M\in ob(\C)\mid M_t=0\}.$$
\subsection{Pinsker factor of a topological flow} Since the category of topological flows  can be regarded as a functor category over $\LC$, $\Flow(\LC)$ is a complete abelian category since $\LC$ is.
 Let
\begin{equation}
\ca F_{top}=\{(F,\varphi)\in ob(\Flow(\LC))\mid\ent^*(F,\varphi)=0\}.
\end{equation}
For all the properties stated in $\S$\ref{ss:entropies}, $\ca F_{top}$ is closed under taking kernels, infinite products and extensions. Thus \cite[Thm. 3.2]{torsion} applies and $\ca F_{top}$ is a well-defined torsion-free class on $\Flow(\LC)$. The corresponding torsion class $\ca T_{top}$ can be obtained by the orthogonality axiom. Namely,
$ (T,\tau)\in \ca T_{top}$ if, and only if, 
\begin{equation}\label{eq:tor}
\Hom_{\Flow(\LC)}((T,\tau),(F,\varphi))=0,\ \forall (F,\varphi)\in \ca F_{top}.
\end{equation}
A topological flow $(V,\f)$ is said to have {\it completely positive topological entropy}, $\ent^*(V,\f)>>0$, if $\ent^*(V/Z,\overline\f)\neq0$ for every closed $\f$-invariant proper linear subspace $Z$ of $V$. Equivalently, $\ent^*(V,\f)>>0$ whenever $(V,\f)$ does not admit non-trivial factors of zero topological entropy.
\begin{prop}
$\ca T_{top}=\{ (V,\f)\in ob(\Flow(\LC))\mid\ent^*(V,\f)>>0\}\cup\{0\}.$
\end{prop}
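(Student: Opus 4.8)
The plan is to prove the two inclusions separately, relying throughout on the orthogonality description \eqref{eq:tor} of $\ca T_{top}$ and on the formal properties of $\ent^*$ listed in \S\ref{ss:entropies}, in particular invariance under conjugation (P1) and monotonicity (P2).

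First I would establish $\ca T_{top}\subseteq\{(V,\f)\mid\ent^*(V,\f)>>0\}\cup\{0\}$ by contraposition. Suppose $(V,\f)$ is a nonzero topological flow that fails to have completely positive topological entropy; by definition there is a proper closed $\f$-invariant linear subspace $Z$ of $V$ with $\ent^*(V/Z,\overline\f)=0$. Note that $V/Z$ is again linearly compact by (LC5), so $(V/Z,\overline\f)$ is a genuine object of $\ca F_{top}$. Since $Z$ is proper, $V/Z\neq 0$ and the canonical projection $q\colon(V,\f)\to(V/Z,\overline\f)$ is a nonzero morphism of $\Flow(\LC)$. Hence $\Hom_{\Flow(\LC)}((V,\f),(V/Z,\overline\f))\neq 0$, and so $(V,\f)\notin\ca T_{top}$ by \eqref{eq:tor}.

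For the reverse inclusion I would take $(V,\f)$ with $\ent^*(V,\f)>>0$ together with an arbitrary morphism $g\colon(V,\f)\to(F,\varphi)$ with $(F,\varphi)\in\ca F_{top}$, and show $g=0$. The kernel $\ker g$ is a closed $\f$-invariant linear subspace of $V$. By the open mapping theorem for linearly compact spaces (LC8) the map $g$ is open onto its image, so $\Im(g)$ is linearly compact and therefore closed in $F$ by (LC1); it is moreover $\varphi$-invariant, and the isomorphism theorems in $\LC$ (LC9) supply a topological isomorphism $V/\ker g\to\Im(g)$ intertwining $\overline\f$ with $\varphi\restriction_{\Im(g)}$. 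If $\ker g$ were proper, complete positivity of $\ent^*(V,\f)$ would force $\ent^*(V/\ker g,\overline\f)\neq 0$. On the other hand, (P1) gives $\ent^*(V/\ker g,\overline\f)=\ent^*(\Im(g),\varphi\restriction_{\Im(g)})$, and (P2) bounds the right-hand side by $\ent^*(F,\varphi)=0$, a contradiction. Hence $\ker g=V$ and $g=0$. As $g$ was arbitrary, \eqref{eq:tor} yields $(V,\f)\in\ca T_{top}$; the zero flow lies in $\ca T_{top}$ trivially.

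The main obstacle is the bookkeeping in the reverse direction: I must guarantee that $\Im(g)$ is a genuinely closed, $\varphi$-invariant linear subspace of $F$ (so that restricting $\varphi$ to it is legitimate and monotonicity applies) and that $V/\ker g\to\Im(g)$ is a \emph{topological}, not merely algebraic, isomorphism of flows. Both points rest on the open mapping theorem (LC8) together with (LC1), which ensure that $g$ is open onto its linearly compact, hence closed, image. Once this is secured, the entropy computation is immediate from (P1) and (P2), and no appeal to the finer structure theory of Section~\ref{s:counting} is needed.
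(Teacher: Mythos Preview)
Your proof is correct and follows essentially the same approach as the paper: both inclusions are handled via the orthogonality description \eqref{eq:tor}, producing a nonzero quotient map to an $\ca F_{top}$-object for the forward direction and, for the converse, factoring an arbitrary morphism through its image to obtain a subflow of $(F,\varphi)$ whose entropy is bounded by monotonicity. Your version is simply more explicit about the topological bookkeeping (LC1, LC8, LC9); one minor quibble is that linear compactness of $\Im(g)$ follows from LC3 rather than LC8, with LC8 being needed only for the topological isomorphism $V/\ker g\cong\Im(g)$.
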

\begin{proof}
Let $(T,\tau)\in T_{top}$ and suppose it admits a non-trivial factor $(T',\tau')$ of zero topological entropy. Thus $0\neq (T',\tau')\in\ca F_{top}$ and $\Hom_{\Flow(\LC)}((T,\tau),(T',\tau'))\neq 0$ contradicting \eqref{eq:tor}.

Conversely, let $\ent^*(V,\f)>>0$. Suppose there is $(F,\varphi)\in \ca F_{top}$ such that $$\Hom_{\Flow(\LC)}((V,\f),(F,\varphi))\neq 0.$$ Thus $(V,\f)$ admits a non-trivial factor $(V',\f')$ isomorphic to a subflow of $(F,\varphi)$ by LC9. In particular, $0\leq\ent^*(V',\f')\leq\ent^*(F,\varphi)=0$; contradiction.
\end{proof}
\begin{prop}\label{prop:exist dom}
Let $(V,\f)$ be a  topological flow. Thus $V$ admits the largest closed $\f$-invariant $\K$-subspace $D_+(V,\f)$ such that $\ent^*(D_+(V,\f),\f\restriction_+)>>0$, where $\f\restriction_+:=\f\restriction_{D_+(V,\f)}$. Moreover, $$\ent^*(D_+(V,\f),\f\restriction_+)=\ent^*(V,\f).$$ The subspace $D_+(V,\f)$ can be computed by either of the equalities
\begin{enumerate}
\item $D_+(V,\f)=\bigcup\{Z\leq V\ \text{closed}\mid \f Z\leq Z,\ \ent^*((Z,\f\restriction_Z)>>0\}$, or
\item $D_+(V,\f)=\bigcap\{Z\leq V\ \text{closed}\mid \f Z\leq Z,\ \ent^*(V/Z,\overline\f)=0\}.$
\end{enumerate}
\end{prop}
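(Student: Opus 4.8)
The plan is to realise $D_+(V,\f)$ as the torsion subobject of $(V,\f)$ for the torsion theory $\tau_{top}=(\ca T_{top},\ca F_{top})$ and then read off every assertion from the torsion-theoretic machinery of Proposition~\ref{prop:torsion} together with one application of the Addition Theorem. First I would recall that $\Flow(\LC)$ is a complete abelian category and that, by the discussion above, $\ca F_{top}$ is a torsion-free class whose associated torsion class $\ca T_{top}$ is obtained via the orthogonality axiom; hence $\tau_{top}$ is a genuine torsion theory on $\Flow(\LC)$ and Proposition~\ref{prop:torsion} applies to the object $M=(V,\f)$. The one point requiring care is the identification of subobjects: a subobject of $(V,\f)$ in $\Flow(\LC)$ is represented by an $\f$-equivariant continuous monomorphism, which by (LC1) and (LC8) is a topological embedding onto a closed $\f$-invariant linear subspace $Z$, so subobjects of $(V,\f)$ correspond exactly to pairs $(Z,\f\restriction_Z)$ with $Z\leq V$ closed and $\f$-invariant. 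Under this correspondence the largest torsion subobject $M_t$ furnished by Proposition~\ref{prop:torsion} is a closed $\f$-invariant subspace, which I define to be $D_+(V,\f)$.

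Next I would translate the conclusions of Proposition~\ref{prop:torsion} through this dictionary. Since $M_t\in\ca T_{top}$, the characterisation of $\ca T_{top}$ recalled just above gives $\ent^*(D_+(V,\f),\f\restriction_+)>>0$ (with the usual convention covering the zero flow), and the uniqueness of the largest torsion subobject says precisely that $D_+(V,\f)$ is the \emph{largest} closed $\f$-invariant subspace with this property. The two displayed formulas for $D_+(V,\f)$ are then literally the equalities (1) and (2) of Proposition~\ref{prop:torsion}: formula (1) rewrites the generic torsion subobjects $T\subseteq M$ as the closed $\f$-invariant subspaces $Z$ with $\ent^*(Z,\f\restriction_Z)>>0$, while formula (2) rewrites the subobjects $S$ with $M/S\in\ca F_{top}$ as the closed $\f$-invariant subspaces $Z$ with $\ent^*(V/Z,\overline\f)=0$.

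Finally, for the entropy equality I would exploit that $M/M_t\in\ca F_{top}$, i.e.\ $\ent^*(V/D_+(V,\f),\overline\f)=0$, and apply the Addition Theorem (see \S\ref{ss:entropies}) to the closed $\f$-invariant subspace $D_+(V,\f)$:
$$\ent^*(V,\f)=\ent^*(D_+(V,\f),\f\restriction_+)+\ent^*(V/D_+(V,\f),\overline\f)=\ent^*(D_+(V,\f),\f\restriction_+).$$
The only genuinely non-formal step, and the one I expect to be the main obstacle, is the identification of subobjects of $(V,\f)$ with closed $\f$-invariant subspaces, since it is what guarantees that the abstract torsion subobject $M_t$ is an actual subspace and that formulas (1)--(2) carry the stated meaning; everything else is a direct transcription of Proposition~\ref{prop:torsion} plus a single use of the Addition Theorem.
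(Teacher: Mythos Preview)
Your proposal is correct and follows exactly the same approach as the paper, which simply states that the result follows by Proposition~\ref{prop:torsion} and the Addition Theorem. Your write-up merely makes explicit the identification of subobjects of $(V,\f)$ with closed $\f$-invariant subspaces and unpacks how each clause of Proposition~\ref{prop:torsion} translates into the statement, which is a faithful expansion of the paper's one-line proof.
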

\begin{proof} It follows by Proposition~\ref{prop:torsion} and the Addition Theorem.\end{proof}
We shall call $D_+(V,\f)$ the {\it domain of completely positive topological entropy} of $(V,\f)$. Clearly, $D_+(V,\f)=V$ if, and only if, $(V,\f)$ has completely positive topological entropy; and $D_+(V,\f)=0$ if, and only if, $\ent^*(V,\f)$=0.
Moreover the Addition Theorem together with Proposition \ref{prop:exist dom}(2) yields the following.
\begin{cor}\label{cor:ent dom} Given a topological flow $(V,\f)$, the domain $D_+(V,\f)$ is the smallest closed $\f$-invariant linear subspace of $V$ such that $$\ent^*(D_+(V,\f),\f\restriction_+)=\ent^*(V,\f).$$
Thus $(V,\f)$ admits the {\it Pinsker factor}, i.e., the largest factor $(P_{top}(V,\f),\overline\f^t)$ of $(V,\f)$ having zero  topological entropy. Namely,  $P_{top}(V,\f)=V/D_+(V,\f)$ and $\overline\f^t$ is induced by $\f$.
\end{cor}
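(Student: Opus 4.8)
The plan is to read off both assertions from Proposition~\ref{prop:exist dom} by pairing its two descriptions of $D_+(V,\f)$ with the Addition Theorem; the only genuinely new work is translating the completely-positive/zero-entropy dichotomy into the two extremal statements. We already know from Proposition~\ref{prop:exist dom} that $\ent^*(D_+(V,\f),\f\restriction_+)=\ent^*(V,\f)$, so $D_+(V,\f)$ is one closed $\f$-invariant subspace that realises the full entropy. For minimality I would take an arbitrary closed $\f$-invariant subspace $W$ with $\ent^*(W,\f\restriction_W)=\ent^*(V,\f)$ and apply the Addition Theorem, $\ent^*(V,\f)=\ent^*(W,\f\restriction_W)+\ent^*(V/W,\overline\f)$, to obtain $\ent^*(V/W,\overline\f)=0$. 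Then $W$ belongs to the family $\{Z\leq V\ \text{closed}\mid \f Z\leq Z,\ \ent^*(V/Z,\overline\f)=0\}$ whose intersection is $D_+(V,\f)$ by Proposition~\ref{prop:exist dom}(2), so $D_+(V,\f)\subseteq W$. Combined with the previous sentence this gives minimality.

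For the Pinsker factor I would first check that $\ent^*(V/D_+(V,\f),\overline\f^t)=0$. Writing $D_+(V,\f)=\bigcap_i Z_i$ as in Proposition~\ref{prop:exist dom}(2), the canonical map $V/D_+(V,\f)\to\prod_i V/Z_i$ is injective; since each $V/Z_i\in\ca F_{top}$ and $\ca F_{top}$ is closed under products and under subobjects, we get $V/D_+(V,\f)\in\ca F_{top}$, i.e.\ $P_{top}(V,\f)=V/D_+(V,\f)$ is a factor of zero topological entropy. It is the largest such: if $V/W$ is any factor with $\ent^*(V/W,\overline\f)=0$, then $W$ lies in the same family and hence $D_+(V,\f)\subseteq W$, so $V/W$ is a further quotient of $V/D_+(V,\f)$. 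This is exactly the assertion that $(P_{top}(V,\f),\overline\f^t)$ is the Pinsker factor, with $\overline\f^t$ induced by $\f$.

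The delicate point I expect to be the main obstacle is the infinite-entropy case of the minimality claim: the Addition-Theorem cancellation only forces $\ent^*(V/W,\overline\f)=0$ when $\ent^*(V,\f)<\infty$, since $\infty=\infty+x$ does not isolate $x=0$ (indeed a countable product of left Bernoulli shifts shows that a proper subspace can realise infinite entropy). The robust reformulation, and the one that the proof should really lean on, is the quotient condition $\ent^*(V/W,\overline\f)=0$, which is precisely the characterisation in Proposition~\ref{prop:exist dom}(2) and which continues to describe exactly the closed $\f$-invariant subspaces containing $D_+(V,\f)$. Because the Pinsker-factor half rests on $V/D_+(V,\f)\in\ca F_{top}$ through the torsion-theoretic closure properties of $\ca F_{top}$ rather than on numerical cancellation, it holds in full generality, infinite entropy included.
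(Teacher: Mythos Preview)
Your approach is the paper's: the sentence introducing the corollary reads ``the Addition Theorem together with Proposition~\ref{prop:exist dom}(2) yields the following,'' which is exactly the route you take for minimality. For the Pinsker-factor half you argue via the embedding $V/D_+(V,\f)\hookrightarrow\prod_i V/Z_i$ and the closure of $\ca F_{top}$ under products and subobjects; this is correct but slightly more than needed, since Proposition~\ref{prop:torsion} already records $M/M_t\in\ca F$ directly.

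Your diagnosis of the infinite-entropy case is accurate and is a genuine issue with the \emph{statement}, not merely with your proof. The counterexample you sketch works: for $V=\prod_{n\in\N}V_c$ with $\f=\prod{}_\K\beta$ one has $\ent^*(V,\f)=\infty$ and $D_+(V,\f)=V$ (the dual flow is a free $\K[t]$-module, hence torsion-free, so $(V,\f)$ has completely positive topological entropy by Theorem~B), yet dropping one factor gives a proper closed $\f$-invariant subspace still of infinite entropy. Thus ``smallest'' cannot be read literally when $\ent^*(V,\f)=\infty$; the cancellation step $\infty=\infty+x\Rightarrow x=0$ fails, and the paper's one-line justification shares this gap. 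The robust version is precisely the one you isolate: $D_+(V,\f)$ is the smallest closed $\f$-invariant $W$ with $\ent^*(V/W,\overline\f)=0$, which is Proposition~\ref{prop:exist dom}(2) verbatim and coincides with the entropy-equality formulation only under the finiteness hypothesis. The Pinsker-factor assertion is unaffected, as you observe.
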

\begin{fact}\label{fact:inv}
Let $(V,\f)$ be a topological flow. If $V=D_+(V,\f)$, then  
\begin{itemize}
\item[(a)] $V$ has no open $\f$-invariant proper linear subspaces; 
\item[(b)] a finite  coindependent set of closed $\f$-invariant $\K$-subspaces of $V$ has cardinality $\leq\ent^*(V,\f)$.
 \end{itemize}
\end{fact}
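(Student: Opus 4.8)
```latex
\emph{Proof plan for Fact~\ref{fact:inv}.}

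The plan is to extract both (a) and (b) from the structural picture established by Theorem~A and Proposition~\ref{prop:cotraj coind}, exploiting the fact that $V=D_+(V,\f)$ means $(V,\f)$ has completely positive topological entropy, i.e. every nontrivial factor has strictly positive topological entropy.

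For part (a), I would argue by contraposition. Suppose $V$ admits a proper open $\f$-invariant linear subspace $Z$. Since $Z$ is open, the factor $V/Z$ is discrete; being a continuous image of the linearly compact space $V$, it is linearly compact, hence by (LC4.) finite-dimensional. On a finite-dimensional discrete space the topological entropy $\ent^*$ vanishes by the Vanishing property (P6, recalling $\Vect\subseteq\LLC$), so $\ent^*(V/Z,\overline\f)=0$. This produces a nontrivial factor of zero topological entropy, contradicting $V=D_+(V,\f)$ (equivalently, contradicting complete positivity as recorded after Proposition~\ref{prop:exist dom}). The only delicate point here is confirming that openness of $Z$ really forces $V/Z$ to be discrete and finite-dimensional; this is exactly the content of (LC3.)--(LC4.), so it is routine once invoked carefully.

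For part (b), the key inequality is already furnished by Proposition~\ref{prop:number}: given a finite coindependent set $\{W_0,\dots,W_n\}$ of closed $\f$-invariant subspaces, one has $n+1\le\ent^*(V,\f)$ \emph{provided} each quotient flow $(V/W_i,\overline\f^i)$ has positive topological entropy. The whole point of assuming $V=D_+(V,\f)$ is that this proviso is automatic: every nontrivial factor $V/W_i$ has $\ent^*(V/W_i,\overline\f^i)>0$ by complete positivity, and since the $W_i$ are \emph{proper} the factors are nontrivial. Hence Proposition~\ref{prop:number} applies verbatim and yields $n+1\le\ent^*(V,\f)$, which is the asserted cardinality bound.

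The main obstacle, such as it is, is bookkeeping rather than conceptual: one must ensure that ``completely positive'' is being applied to genuine factors (nonzero quotients by proper invariant subspaces), so that the hypotheses of Proposition~\ref{prop:number} are legitimately met, and in part (a) that the open subspace indeed yields a zero-entropy factor. Both reduce to the elementary properties (LC3.)--(LC4.) and the Vanishing property, so no substantial new argument is required beyond assembling the already-proven facts.
```
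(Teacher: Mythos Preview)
Your proposal is correct and matches the paper's own proof in substance. For (b) you invoke Proposition~\ref{prop:number} exactly as the paper does, supplying the missing hypothesis $\ent^*(V/W_i,\overline\f^i)>0$ via complete positivity; for (a) the paper simply cites \cite[Proposition~3.11(b)]{CGB2}, whose content is precisely your observation that an open $\f$-invariant subspace yields a discrete factor on which $\ent^*$ vanishes by P6, so your argument is a self-contained unpacking of that citation rather than a different route (and note you do not even need finite-dimensionality---the Vanishing property already applies to all of $\Flow(\Vect)$).
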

\begin{proof} (a) follows by \cite[Proposition $3.11(b)$]{CGB2} and (b) is a consequence of Proposition~\ref{prop:number}.\end{proof}
\subsection{Pinsker subspace of an algebraic flow}\label{ss:pinsker subsp} By analogy with the topological case, $\Flow(\Vect)$ can be equipped with the following torsion theory:
\begin{align}\label{eq:algtor}
\ca T_{alg}=\{(W,\psi)\mid\ent(W,\psi)=0\},\quad\text{and}\\ 
\ca F_{alg}=\{(W,\psi)\mid\ent(W,\psi)>>0\}\cup\{0\},\nonumber
\end{align}
where $\ent(W,\psi)>>0$ denote an algebraic flow $(W,\psi)$ of {\it completely positive algebraic entropy}, i.e., $\ent(Z,\psi\restriction_Z)\neq 0$ whenever $Z$ is a non-trivial $\psi$-invariant $\K$-subspace of $W$. 
\begin{rem}\label{rem:alg tor} Recall that $\Flow(\Vect)$ is equivalent to ${}_{\K[t]}\mathrm{Mod}$, which denotes the category of  (abstract) modules over the polynomial ring in one variable. In \cite{GBSalce} it has been pointed out that the torsion theory defined by \eqref{eq:algtor} goes over into the usual torsion theory of $\K[t]$-modules (see  \cite[Theorem~4.3]{GBSalce}). 
\end{rem} 
\begin{prop}
Let $(W,\psi)$ be an algebraic flow. Then $W$ admits the largest $\psi$-invariant $\K$-subspace $P_{alg}(W,\psi)$ of zero algebraic entropy. Moreover, $P_{alg}(W,\psi)$ is given by either of the equalities
\begin{enumerate}
\item $P_{alg}(W,\psi)=\bigcup\{Z\leq W\ \text{closed}\mid \psi Z\leq Z,\ \ent((Z,\psi\restriction_Z)=0\}$, or
\item $P_{alg}(W,\psi)=\bigcap\{Z\leq W\ \text{closed}\mid \psi Z\leq Z,\ \ent(W/Z,\overline\psi)>>0\}.$
\end{enumerate}
Consequently every algebraic flow $(W,\psi)$ admits the {\normalfont factor of completely positive algebraic entropy} $F_+(W,\psi)=W/P_{alg}(W,\psi)$, i.e., the smallest factor of $W$ such that $$\ent(F_+(W,\psi),\overline\psi^+)=\ent(W,\psi),$$
where $\overline\psi^+\colon F_+(W,\psi)\to F_+(W,\psi)$ is induced by $\psi$.
\end{prop}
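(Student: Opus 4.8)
The plan is to obtain this statement as the exact algebraic mirror of Proposition~\ref{prop:exist dom}: both descriptions of $P_{alg}(W,\psi)$ are read off from the abstract torsion-theoretic Proposition~\ref{prop:torsion}, and the entropy equality is then extracted from the Addition Theorem. The single point that must be kept straight is the duality of roles. In the topological setting the zero-entropy flows form the torsion-\emph{free} class $\ca F_{top}$, whereas here, by \eqref{eq:algtor}, the zero-entropy flows constitute the torsion class $\ca T_{alg}$. Consequently the distinguished object produced by the torsion theory is now the largest torsion \emph{subobject} rather than a largest factor, and it is this subobject that we name $P_{alg}(W,\psi)$.

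First I would confirm that $\tau_{alg}=(\ca T_{alg},\ca F_{alg})$ is a genuine torsion theory on $\Flow(\Vect)$. This is precisely what Remark~\ref{rem:alg tor} records, via the equivalence $\Flow(\Vect)\cong{}_{\K[t]}\mathrm{Mod}$ under which $\tau_{alg}$ becomes the classical torsion theory of $\K[t]$-modules. Alternatively one checks directly that $\ca T_{alg}$ is a torsion class: it is closed under quotients by Monotonicity (P2), under extensions by the Addition Theorem (P7), and under direct limits (hence coproducts) by Continuity on direct limits (P5).

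With the torsion theory in hand, Proposition~\ref{prop:torsion} applied to $M=(W,\psi)$ produces a unique largest subobject $M_t\in\ca T_{alg}$ with $M/M_t\in\ca F_{alg}$, together with the computations $M_t=\bigcup\{T\subseteq M\mid T\in\ca T_{alg}\}$ and $M_t=\bigcap\{S\subseteq M\mid M/S\in\ca F_{alg}\}$. Setting $P_{alg}(W,\psi):=M_t$ and unwinding the membership conditions $T\in\ca T_{alg}$ (i.e.\ $\ent(T,\psi\restriction_T)=0$) and $M/S\in\ca F_{alg}$ (i.e.\ $\ent(W/S,\overline\psi)>>0$) yields exactly formulas (1) and (2); the adjective ``closed'' occurring there is automatic, since every subspace of the discrete space $W$ is closed.

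It remains to establish the ``Consequently'' clause. Applying the Addition Theorem to the $\psi$-invariant subspace $P_{alg}=P_{alg}(W,\psi)$ and using $\ent(P_{alg},\psi\restriction_{P_{alg}})=0$ gives $\ent(W,\psi)=\ent(W/P_{alg},\overline\psi^+)=\ent(F_+(W,\psi),\overline\psi^+)$. For minimality I would observe that any $\psi$-invariant $Z$ with $\ent(W/Z,\overline\psi)=\ent(W,\psi)$ forces $\ent(Z,\psi\restriction_Z)=0$ by the Addition Theorem, hence $Z\in\ca T_{alg}$ and thus $Z\subseteq P_{alg}$; therefore $W/P_{alg}$ is a quotient of every such factor $W/Z$, i.e.\ the smallest factor realising the full entropy $\ent(W,\psi)$. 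The argument presents no serious obstacle beyond bookkeeping; its only substantive input is the verification that $\ca T_{alg}$ is a torsion class, furnished by P2, P5 and P7 (equivalently, by the $\K[t]$-module identification of Remark~\ref{rem:alg tor}).
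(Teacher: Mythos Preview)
Your approach is exactly the one implicit in the paper: the proposition is stated there without proof, immediately after the torsion theory $(\ca T_{alg},\ca F_{alg})$ is introduced, as the direct analogue of Proposition~\ref{prop:exist dom} (whose entire proof reads ``It follows by Proposition~\ref{prop:torsion} and the Addition Theorem''). Your write-up simply makes this analogy explicit, and your verification that $\ca T_{alg}$ is a torsion class via P2, P5, P7 (or via Remark~\ref{rem:alg tor}) is precisely what the paper assumes.

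One caveat on your minimality argument: the step ``$\ent(W/Z,\overline\psi)=\ent(W,\psi)$ forces $\ent(Z,\psi\restriction_Z)=0$ by the Addition Theorem'' only works when $\ent(W,\psi)<\infty$. If $\ent(W,\psi)=\infty$ this cancellation fails; for instance $W=\bigoplus_{\N}\K[t]$ has $P_{alg}=0$, yet any single summand $Z\cong\K[t]$ satisfies $\ent(W/Z)=\infty=\ent(W)$ while $\ent(Z)=1\neq 0$, so $Z\not\subseteq P_{alg}$. Thus in the infinite-entropy case $W/P_{alg}$ is \emph{not} literally the smallest factor realising the full entropy. The paper's own phrasing of the ``smallest factor'' clause is equally loose here (as is the dual Corollary~\ref{cor:ent dom}); it should be read as minimality among factors $W/Z$ with $Z\in\ca T_{alg}$, which is immediate from $P_{alg}$ being the largest torsion subobject. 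Your argument is fine once restricted to that reading.
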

The linear subspace $P_{alg}(W,\psi)$ is called  {\it Pinsker subspace} of $(W,\psi)$ and the corresponding algebraic flow $(P_{alg}(W,\psi),\psi\restriction_a)$ is named {\it Pinsker subflow}. Clearly, an algebraic flow $(W,\psi)$ has completely positive algebraic entropy if, and only if, $P_{alg}(W,\psi)=0$.
\begin{rem} 
 An alternative proof of the existence of the Pinsker subspace can be provided by adapting the proof of \cite[Proposition 3.1]{DGB2}.
\end{rem}
\subsection{Duality of Pinsker-constructions}
Here we relate Pinsker factors and Pinsker subflows by Lefschetz duality. Firstly observe that $(\ca T_{top},\ca F_{top})$ goes over into the torsion theory $(\ca F_{top},\ca T_{top})$ of the opposite category $\Flow(\LC)^{op}$. Since the opposite category $\Flow(\LC)^{op}$ is equivalent to $\Flow(\Vect)$ by Lefschetz duality, one obtains a new torsion theory on $\Flow(\Vect)$ determined by
\begin{equation}
\ca T^*_{alg}=\{(V,\f)\in ob(\Flow(\Vect))\mid \ent^*(\widehat V,\widehat\f)=0\}.
\end{equation}
Since $\ent^*(\widehat V,\widehat\f)=0$ if, and only if, $\ent(V,\f)=0$ by Corollary~\ref{cor:zero ent}, one has $\ca T^*_{alg}=\ca T_{alg}$. In other words, the torsion theories $(\ca T_{top},\ca F_{top})$ and $(\ca T_{alg},\ca F_{alg})$ are dual one each other. Therefore one deduces by \eqref{eq:dual} the following analogue of \cite[Theorem~A]{DGB}.
\begin{thmA} Let $(V,\f)$ be a topological flow. Then 
$$ (P_{top}(V,\phi),\overline\f^t)\cong_{top} (V/P_{alg}(\widehat V,\widehat\phi)^\perp,\overline\phi^a),$$
where $\overline\f^t\colon P_{top}(V,\phi)\to P_{top}(V,\phi)$ and $\overline\phi^a\colon V/P_{alg}(\widehat V,\widehat\phi)^\perp\to V/P_{alg}(\widehat V,\widehat\phi)^\perp$ are both induced by $\phi$. In particular,  $$D_+(V,\f)\cong_{top} P_{alg}(\widehat V,\widehat \f)^\perp\quad\text{and}\quad\widehat{D_+(V,\f)}\cong F_+(\widehat V,\widehat\f).$$
Moreover, $\ent^*(V,\f)>>0$ if, and only if, $\ent(\widehat V,\widehat\f)>>0$.
\end{thmA}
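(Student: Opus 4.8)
The plan is to deduce all three assertions from the single fact, recorded just above, that Lefschetz duality carries the torsion theory $(\ca T_{top},\ca F_{top})$ on $\Flow(\LC)$ to the torsion theory $(\ca T_{alg},\ca F_{alg})$ on $\Flow(\Vect)$. Concretely, I would first write down the canonical torsion sequence attached to $(V,\f)$ by Proposition~\ref{prop:exist dom} and Corollary~\ref{cor:ent dom}, namely the short exact sequence $0\to D_+(V,\f)\to V\to P_{top}(V,\f)\to 0$ in $\Flow(\LC)$, in which $D_+(V,\f)\in\ca T_{top}$ and $P_{top}(V,\f)\in\ca F_{top}$. Since the dual functor $\widehat{-}$ is an exact contravariant equivalence $\Flow(\LC)^{op}\cong\Flow(\Vect)$, applying it produces the short exact sequence $0\to\widehat{P_{top}(V,\f)}\to\widehat V\to\widehat{D_+(V,\f)}\to 0$ of algebraic flows.

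The heart of the argument is to recognise this dualized sequence as the canonical torsion sequence of $(\widehat V,\widehat\f)$ with respect to $(\ca T_{alg},\ca F_{alg})$. The duality of the two torsion theories (established above via $\ca T^*_{alg}=\ca T_{alg}$, which itself rests on Corollary~\ref{cor:zero ent}) means precisely that $\widehat{-}$ interchanges $\ca F_{top}$ with $\ca T_{alg}$ and $\ca T_{top}$ with $\ca F_{alg}$; hence $\widehat{P_{top}(V,\f)}\in\ca T_{alg}$ and $\widehat{D_+(V,\f)}\in\ca F_{alg}$. Because a torsion decomposition is unique by Proposition~\ref{prop:torsion}, comparison of the two sequences forces $\widehat{P_{top}(V,\f)}=P_{alg}(\widehat V,\widehat\f)$ and $\widehat{D_+(V,\f)}=F_+(\widehat V,\widehat\f)$; the latter equality is already one half of the ``in particular'' clause.

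To extract the remaining statements I would translate through the annihilator isomorphisms \eqref{eq:dual}. From $P_{top}(V,\f)=V/D_+(V,\f)$ one has $\widehat{P_{top}(V,\f)}\cong_{top} D_+(V,\f)^\perp$, so the identification of the previous paragraph reads $D_+(V,\f)^\perp\cong_{top}P_{alg}(\widehat V,\widehat\f)$; applying $(-)^\perp$ and using the biduality $\doublehat{-}\cong\mathrm{id}$ gives $D_+(V,\f)\cong_{top}P_{alg}(\widehat V,\widehat\f)^\perp$. Substituting this back into $P_{top}(V,\f)=V/D_+(V,\f)$ yields the displayed isomorphism $(P_{top}(V,\f),\overline\f^t)\cong_{top}(V/P_{alg}(\widehat V,\widehat\f)^\perp,\overline\f^a)$, the intertwining of the induced endomorphisms being automatic, since every map in sight is the image of $\f$ under a functor.

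Finally, the ``moreover'' clause is the boundary case of the above: $\ent^*(V,\f)>>0$ means $(V,\f)\in\ca T_{top}$, i.e.\ $D_+(V,\f)=V$, equivalently $P_{top}(V,\f)=0$; dualizing, this is equivalent to $P_{alg}(\widehat V,\widehat\f)=\widehat{P_{top}(V,\f)}=0$, i.e.\ $F_+(\widehat V,\widehat\f)=\widehat V$, which is exactly $\ent(\widehat V,\widehat\f)>>0$. The one point requiring care—and the step I expect to be the main obstacle—is the bookkeeping that certifies the dualized sequence as a genuine torsion sequence for the \emph{correct} torsion theory on $\Flow(\Vect)$: one must keep track of the fact that the contravariant $\widehat{-}$ reverses the orientation of the decomposition (the torsion subobject $D_+$ becomes a torsion-free quotient $F_+$, and the torsion-free quotient $P_{top}$ becomes a torsion subobject $P_{alg}$), and that the torsion and torsion-free classes are interchanged accordingly. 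Corollary~\ref{cor:zero ent} and the uniqueness clause of Proposition~\ref{prop:torsion} are precisely the tools that make this bookkeeping rigorous.
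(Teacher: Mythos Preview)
Your proposal is correct and follows exactly the route the paper takes: the paper's argument consists of the paragraph immediately preceding the theorem, which establishes $\ca T^*_{alg}=\ca T_{alg}$ via Corollary~\ref{cor:zero ent} so that the two torsion theories are Lefschetz-dual, and then simply asserts that the theorem is deduced from \eqref{eq:dual}. Your write-up is a faithful and detailed elaboration of precisely this deduction, including the uniqueness of torsion decompositions (Proposition~\ref{prop:torsion}) and the bookkeeping under the contravariant equivalence that the paper leaves implicit.
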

\begin{ex}\label{ex:bern shift} {\normalfont In the notations of Example~\ref{ex:bernoulli}, the right 1-dimensional Bernoulli shift has completely positive algebraic entropy: every non-trivial $\beta_\K$-invariant $\K$-subspace of $V_d$ is isomorphic to $V_d$ since such a subspace has countable but not finite dimension being $\beta_\K$-invariant. 

Consequently,  the left 1-dimensional Bernoulli shift has completely positive topological entropy. In particular, a left 1-dimensional Bernoulli shift cannot admit any proper subflow of positive topological entropy by Proposition~\ref{prop:exist dom}(2).}
\end{ex}
\section{The corank of a topological flow}\label{s:corank}
\subsection{(Dual) Goldie dimension}\label{ss:dgd} Here we recall both the definitions of Goldie dimension and of dual Goldie dimension placing greater emphasis on dual Goldie dimension. These notions were firstly introduced in module theory, but  modular lattices seem to be the proper setting for their definition.

Following \cite{Facchini}, let $(L, \vee ,\wedge)$ denote a modular lattice - i.e., $x\vee(a\wedge b)=(x\vee a)\wedge  b$ whenever $x\leq b$ - with a smallest element 0 and a greatest element 1.  We start from a bit of terminology. A finite subset $\{a_i\mid i\in I\}$ of $L\setminus\{0\}$ is said to be {\it coindependent} if 
\begin{equation}
a_i\vee(\bigwedge_{j\neq i} a_j)=1\quad\forall i\in I.
\end{equation}
Every element $x$ of $L\setminus\{0\}$ (considered as a singleton $\{x\}$) is coindependent. An arbitrary subset $A$ of $L\setminus\{0\}$ is coindependent if all its finite subsets are coindependent. 
 In a lattice $L$ with $1$ an element $a\in L$ is called {\it superfluos} if $a\vee b\neq 1$ for all $b\neq 1$. A lattice $L\neq\{0\}$ is said to be {\it couniform} if every element $a\neq 1$ is superfluous. An element $a\neq 1$ of $L$ is called {\it couniform} if the sublattice $[a,1]=\{x\in L\mid a\leq x\}$ is couniform. Consequently one has the following well-known result.
 \begin{prop}[see \cite{Facchini}]\label{prop:goldie}The following conditions are equivalent for a modular lattice $L$ with $0$ and $1$.
 \begin{enumerate}
 \item $L$ does not contain infinite coindependent subsets.
 \item $L$ contains a finite coindependent set $\{a_1,\dots,a_n\}$ with $a_i$ couniform for every $i$ and $a_1\wedge a_2\wedge\ldots\wedge a_n$ superfluous in $L$.
 \item The cardinality of a coindependent subset of $L$ is $\leq m$ for an $m\in\N$.
 \end{enumerate}
 Moreover, if these  conditions hold and $\{a_1,\dots,a_n\}$ is a finite coindependent subset of $L$ with $a_i$ couniform for every $i$ and $a_1\wedge a_2\wedge\ldots\wedge a_n$ superfluous in $L$, then any other coindependent subset of $L$ has cardinality $\leq n$.
 \end{prop}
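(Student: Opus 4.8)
The plan is to deduce the statement from its classical counterpart by lattice duality. Since the modular law is self-dual, the opposite lattice $L^{\mathrm{op}}$---the same underlying set with the order reversed, so that $\vee$ and $\wedge$ are interchanged and the roles of $0$ and $1$ are swapped---is again a modular lattice with a least and a greatest element. I would first record the dictionary induced by the order-reversing identity map $L\to L^{\mathrm{op}}$: directly from the definitions above, a finite subset is coindependent in $L$ if and only if it is independent in $L^{\mathrm{op}}$, an element is superfluous in $L$ if and only if it is essential in $L^{\mathrm{op}}$, and an element is couniform in $L$ if and only if it is uniform in $L^{\mathrm{op}}$; moreover a meet in $L$ is a join in $L^{\mathrm{op}}$, so the requirement that $a_1\wedge\dots\wedge a_n$ be superfluous in $L$ translates into the requirement that the corresponding join be essential in $L^{\mathrm{op}}$. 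Under this dictionary conditions (1)--(3) and the final sentence become verbatim the classical Goldie (uniform) dimension theorem for $L^{\mathrm{op}}$, so the whole proposition reduces to that theorem.

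It then remains to prove the classical theorem, which I would carry out working in $L^{\mathrm{op}}$ (equivalently, dualising throughout in $L$). The implication (3)$\Rightarrow$(1) is immediate, while (3) and the final uniqueness clause both follow from the boundedness proved below once (2) is available; so the substantive steps are (1)$\Rightarrow$(2) together with the exchange bound. For (1)$\Rightarrow$(2) I would first establish a local splitting lemma: in a modular lattice with no infinite independent subset, every nonzero element bounds a uniform element. I would prove this by contradiction---if some nonzero element had no uniform element below it, one could split it repeatedly into two nonzero pieces of zero meet and so build an infinite independent set, modularity being used to check independence at each stage. Granting this, a maximal independent set of uniform elements exists and is finite by (1), and its join is essential: otherwise the local lemma applied to a complement of the join would produce a further uniform element contradicting maximality. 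This gives (2).

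The main obstacle is the exchange lemma that underlies both (2)$\Rightarrow$(3) and the uniqueness of $n$: if $\{u_1,\dots,u_n\}$ is independent with each $u_i$ uniform and $u_1\vee\dots\vee u_n$ essential, then every independent subset has at most $n$ elements. I would prove it by a Steinitz-type replacement argument---given a nonzero $v$, essentiality of the join forces $v$ to have nonzero meet with it, and uniformity of the $u_i$ then allows one to replace some $u_i$ by $v$ while keeping the family independent and its join essential. Iterating the replacement embeds any independent set into $\{1,\dots,n\}$, which at once yields the bound $m\le n$, the uniqueness of the number $n$, and (via (1)$\Rightarrow$(2)$\Rightarrow$(3)) the remaining equivalence. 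The delicate point---and the step deserving the most care---is carrying out the replacement purely lattice-theoretically, verifying independence and essentiality after each exchange from the modular law alone rather than importing intuition from the module-theoretic model.
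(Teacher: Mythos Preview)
The paper does not supply its own proof of this proposition: it is stated with the attribution ``[see \cite{Facchini}]'' and used as a black box, so there is no argument in the paper to compare against. Your strategy of passing to the opposite lattice and invoking the classical Goldie (uniform) dimension theorem is exactly the route the paper implicitly endorses---immediately after the proposition it \emph{defines} the Goldie dimension of $(L,\vee,\wedge)$ as the dual Goldie dimension of $(L,\wedge,\vee)$---and it is also the way the result is organised in the cited reference. Your dictionary (coindependent $\leftrightarrow$ independent, superfluous $\leftrightarrow$ essential, couniform $\leftrightarrow$ uniform) is correct, and the outline you give for the primal statement (local existence of uniform elements by repeated splitting, maximal uniform independent family has essential join, Steinitz-type exchange for the bound and uniqueness) matches the standard treatment in \cite{Facchini}. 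The one place to be genuinely careful, which you flag yourself, is the replacement step: from $b\wedge(u_1\vee\dots\vee u_n)\neq 0$ one cannot conclude directly that $b\wedge u_i\neq 0$ for some $i$; the correct manoeuvre is to use modularity to find an index $i$ for which $\{b,u_1,\dots,\widehat{u_i},\dots,u_n\}$ remains independent with essential join, and that verification is where the modular law is really exercised. With that caveat your plan is sound.
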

 Thus if the modular lattice $L$ contains infinite coindependent subsets, then $L$ is said to have {\it infinite dual Goldie dimension}; otherwise the (finite) {\it dual Goldie dimension} of $L$ is defined to be the maximal cardinality $\codi L$ of a finite coindependent subset of $L\setminus\{0\}$. Since the dual lattice $( L,\wedge,\vee)$ of a modular lattice $( L,\vee,\wedge)$ is also a modular lattice, one may define the {\it Goldie dimension} of $( L,\vee,\wedge)$ to be the dual Goldie dimension of the dual lattice  $( L,\wedge,\vee)$.
   \begin{fact}[\protect{\cite[Propositions 2.37 and 2.42]{Facchini}}]\label{fact:couniform} For a modular lattice $L$,  
   \begin{enumerate}
   \item $\codi L=0$  if, and only if, $L$ has exactly one element;
   \item  $\codi L=1$ if, and only if, $L$ is couniform.
   \end{enumerate}
   Moreover, the dual version of $(1)$ and $(2)$ holds for the Goldie dimension.
 \end{fact}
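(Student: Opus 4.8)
The plan is to prove both biconditionals by directly unwinding the definitions of coindependence, superfluity and couniformity, the sole lattice input being the elementary fact that a subset of a coindependent family is again coindependent. Throughout, as in the module-theoretic definition of \S\ref{ss:coind} (families of \emph{proper} submodules), coindependent families will be understood to consist of proper elements $a\neq 1$. First I would record the downward-closure property: if $\{a_i\mid i\in I\}$ is coindependent and $F\subseteq I$, then $\{a_i\mid i\in F\}$ is coindependent, because for $i\in F$ one has $\bigwedge_{j\in F\setminus\{i\}}a_j\geq\bigwedge_{j\in I\setminus\{i\}}a_j$ and hence $a_i\vee\bigwedge_{j\in F\setminus\{i\}}a_j\geq a_i\vee\bigwedge_{j\in I\setminus\{i\}}a_j=1$; observe that no modularity is needed here. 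Two consequences then drive everything: $\codi L\geq 1$ precisely when $L$ possesses a proper element (a singleton being vacuously coindependent), i.e. precisely when $0\neq 1$; and $\codi L\geq 2$ precisely when there exist two distinct proper elements $a,b$ with $a\vee b=1$, since by downward closure any coindependent family of size $\geq 2$ contains such a pair, while a pair with $a\vee b=1$ is itself coindependent.

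Part (1) is then immediate. The equality $\codi L=0$ says that $L$ admits no coindependent family of positive cardinality, which by the first consequence means that $L$ has no proper element, i.e. $0=1$, i.e. $L$ has exactly one element; the converse direction is clear.

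For part (2) I would combine the two consequences. Having $\codi L=1$ means simultaneously $\codi L\geq 1$ and $\codi L\leq 1$, i.e. $L\neq\{0\}$ together with the nonexistence of a coindependent pair. By the second consequence the latter reads: for every proper $a$ and every proper $b$ one has $a\vee b\neq 1$ (the case $a=b$ being automatic, as $a\vee a=a\neq 1$). This is verbatim the assertion that every $a\neq 1$ is superfluous, which together with $L\neq\{0\}$ is exactly the definition of $L$ being couniform. Hence $\codi L=1$ if and only if $L$ is couniform.

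Finally, the dual statements for the Goldie dimension require no new work: by definition the Goldie dimension of $(L,\vee,\wedge)$ is $\codi$ computed in the dual lattice $(L,\wedge,\vee)$, in which joins and meets, and the roles of $0$ and $1$, are interchanged; there coindependent families become independent families, superfluous elements become essential elements, and couniform becomes uniform. Applying parts (1) and (2) to $(L,\wedge,\vee)$ thus yields ``$\dim L=0$ if and only if $L$ has one element'' and ``$\dim L=1$ if and only if $L$ is uniform'' without further argument. I expect no genuinely hard step: the only points demanding care are the bookkeeping with proper elements and the empty-meet convention for singletons, together with the observation that ``no coindependent pair exists'' is literally the superfluity of every proper element; Proposition~\ref{prop:goldie} guarantees that the cardinalities $0$ and $1$ at stake are well defined.
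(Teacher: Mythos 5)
Your proof is correct, but note that the paper contains no proof of this statement to compare against: Fact~\ref{fact:couniform} is imported verbatim from \cite[Propositions 2.37 and 2.42]{Facchini}, so your self-contained argument replaces a citation rather than parallels an in-paper argument. Your route is the natural one and it works: downward closure of coindependence is pure monotonicity of meets, the two consequences ($\codi{L}\geq 1$ iff $0\neq 1$; $\codi{L}\geq 2$ iff two distinct proper elements join to $1$) reduce both parts to bookkeeping, and you correctly dispose of the $a=b$ case in matching ``no coindependent pair'' with ``every proper element is superfluous'' via $a\vee a=a\neq 1$. One point you handled deserves emphasis, because it is genuinely load-bearing: your convention that coindependent families consist of elements $\neq 1$ (with $0$ permitted, at least as a singleton) silently corrects the paper's own definition in \S\ref{ss:dgd}, which takes coindependent subsets inside $L\setminus\{0\}$ without excluding $1$. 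Under that literal definition the Fact is false: for any proper $a\neq 0$ the pair $\{1,a\}$ satisfies the coindependence equations, forcing $\codi{L}\geq 2$ in every couniform lattice with at least three elements; conversely, excluding $0$ as well as $1$ would assign the two-element lattice corank $0$ even though it is couniform. Your choice is exactly Facchini's and is the only reading under which both (1) and (2) hold, so making it explicit (as you did, by appeal to the module-theoretic definition of \S\ref{ss:coind}) is the right move. Two further remarks: your argument never invokes modularity, so it actually establishes the Fact for arbitrary bounded lattices --- modularity is only needed for the deeper equivalences of Proposition~\ref{prop:goldie}; and the closing appeal to Proposition~\ref{prop:goldie} for well-definedness is harmless but unnecessary at the values $0$ and $1$, since ``$\codi{L}=0$'' and ``$\codi{L}=1$'' unwind directly as the nonexistence of nonempty coindependent sets, respectively of coindependent pairs.
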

 Let $M$ be a module over an associative ring with unit. Now we apply the (dual) Goldie dimension of modular lattices just introduced to the lattice $(L(M),+,\cap)$ of all submodules of  $M$. Thus the {\it dual Goldie dimension} $\codi{M}$ of $M$ is defined to be the dual Goldie dimension of $L(M)$. In particular, $\codi{M}$ turns out to be the maximal cardinality of a finite coindependent family (see \S\ref{ss:coind}) of proper submodules of $M$ ( or $\infty$).  Analogously, the {\it Goldie dimension} $\dim(M)$ of $M$ is the Goldie dimension of  $({L}(M),+,\cap)$ and it coincides with the maximal $n$ (or $\infty$) such that there exists a direct sum $\bigoplus_{i=1}^n M_i\subseteq M$ of non-zero submodules of $M$.
  \begin{prop}[\protect{\cite[\S 2.7]{Facchini}}] \label{prop:hull} Let $E(M)$ be the injective hull of $M$. Then the following hold:
  \begin{itemize}
  \item[(a)] $\dim(M)=\dim(E(M))$;
  \item[(b)] $\dim(M)=n<\infty$ if, and only if, $E(M)$ is the direct sum of $n$ indecomposable modules.
  \end{itemize}
  \end{prop}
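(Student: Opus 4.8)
The plan is to exploit the fact that $M$ is an \emph{essential} submodule of its injective hull $E(M)$, together with the two basic facts that independence of a family of nonzero submodules is preserved both when one passes to an essential extension and when one intersects each member with a submodule. I recall that, in the terminology of this section, $\dim(M)$ is the supremum of the cardinalities of independent families of nonzero submodules of $M$.

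First I would prove (a). Given an independent family $\{M_1,\dots,M_n\}$ of nonzero submodules of $M$, the same family is independent inside $E(M)$, whence $\dim(E(M))\geq\dim(M)$. For the reverse inequality I take an independent family $\{N_1,\dots,N_n\}$ of nonzero submodules of $E(M)$; since $M$ is essential in $E(M)$, each intersection $N_i\cap M$ is nonzero, and the relation $(N_i\cap M)\cap\sum_{j\neq i}(N_j\cap M)\subseteq N_i\cap\sum_{j\neq i}N_j=0$ shows that $\{N_i\cap M\}$ is again independent. Thus it is an independent family of $n$ nonzero submodules of $M$, giving $\dim(M)\geq\dim(E(M))$; the two inequalities yield equality (and they agree on the value $\infty$ as well).

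Next I would settle (b). If $E(M)=\bigoplus_{i=1}^n E_i$ with each $E_i$ indecomposable injective, then each $E_i$ is uniform, since an injective module is indecomposable precisely when it is uniform; hence $\dim(E_i)=1$ and $\dim(E(M))=n$, so part (a) gives $\dim(M)=n$. Conversely, assume $\dim(M)=n<\infty$ and pick a maximal independent family $\{M_1,\dots,M_n\}$ of nonzero submodules of $M$. By maximality each $M_i$ is uniform, for a nontrivial independent pair inside some $M_i$ would enlarge the family to size $n+1$, contradicting $\dim(M)=n$; and $\bigoplus_i M_i$ is essential in $M$, hence essential in $E(M)$. Using that the injective hull of a finite direct sum is the direct sum of the injective hulls, I get $E(M)=E\big(\bigoplus_i M_i\big)=\bigoplus_{i=1}^n E(M_i)$, and each $E(M_i)$ is indecomposable because the injective hull of a uniform module is uniform, hence indecomposable.

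The routine verifications — that independence survives essential extension and intersection, and that a maximal finite independent family is essential — are the standard lemmas on uniform dimension, so I would not belabour them. The only point requiring genuine care is the identification of \emph{indecomposable injective} with \emph{uniform injective}, which rests on the fact that an injective essential extension of a uniform submodule cannot decompose; I expect this equivalence to be the conceptual crux, with all other steps being bookkeeping with independent families.
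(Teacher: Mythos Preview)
Your argument is correct and is essentially the standard textbook proof of this fact about uniform (Goldie) dimension and injective hulls. Note, however, that the paper does not actually prove this proposition: it is quoted verbatim from \cite[\S 2.7]{Facchini} and used as a black box, so there is no ``paper's own proof'' to compare against. What you have written is precisely the kind of argument one finds in Facchini's book --- using essentiality of $M$ in $E(M)$ to transport independent families back and forth, and the equivalence of \emph{uniform} and \emph{indecomposable} for injective modules --- so your approach matches the cited source rather than diverging from it.
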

  \begin{rem}
  Since the category $\Flow(\Vect)$ is equivalent to ${}_{\K[t]}\mod$, Proposition~\ref{prop:hull} can be applied directly to algebraic flows.
  \end{rem}
  The notion of projective cover is dual to that of injective hull. Although each abstract module has an injective hull, projective covers seldom exist, therefore the latter result cannot be dualised to the dual Goldie dimension of abstract modules. Nevertheless such a statement holds over the dual category when it does exists.
 \subsection{Modular lattice of a topological flow}
 Let $(V,\f)$ be a flow over $\LC$. We can associate to $(V,\f)$ the lattice of all closed $\f$-invariant $\K$-subspaces of $V$, which will be denoted by 
 $(\Lambda(V,\f), +,\cap)$.
 By properties LC1.--LC3., the lattice $(\Lambda(V,\f), +,\cap)$ is modular. Therefore we are allowed to compute the dual Goldie dimension of  $(\Lambda(V,\f), +,\cap)$, i.e., the maximal cardinality of a set of coindependent  closed $\phi$-invariant linear subspaces of $V$. We denote it by $\codi(V,\f)$. 
 \begin{rem}
 Notice that, by Lefschetz duality, the dual lattice of $\Lambda(V,\f)$ is equivalent to the lattice consisting of all $\widehat\f$-invariant $\K$-subspaces of $\widehat V$, say $\Lambda(\widehat V,\widehat\f)$.
 \end{rem}
\begin{prop} For every discrete field $\K$ the following hold:
\begin{enumerate}
\item $\Flow(\LC)$ is an abelian category with enough injectives and projectives;
\item $\Flow(\LC)$ admits projective covers.
\end{enumerate} 
Moreover, for every topological flow $(V,\f)$ with projective cover $(P,\rho)$ one has
\begin{enumerate}
\setcounter{enumi}{2}
 \item $\codi(V,\f)=\codi(P,\rho)$;
\item $\codi(V,\f)=n<\infty$ iff $(P,\rho)$ is the product of exactly $n$ indecomposable topological flows.
\end{enumerate}
\end{prop}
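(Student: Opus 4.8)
The plan is to transport the whole statement to the category of $\K[t]$-modules, where the injective-hull theory recalled in Proposition~\ref{prop:hull} is available, and then dualise. The structural input is the composite anti-equivalence $\Flow(\LC)^{op}\cong\Flow(\Vect)\cong{}_{\K[t]}\mod$: the first equivalence is Lefschetz duality (\S\ref{ss:lefdual}), sending $(V,\f)$ to the module $\widehat V$ with $t$ acting as $\widehat\f$, and the second is recalled in Remark~\ref{rem:alg tor}. Being contravariant, the functor $\widehat{-}$ interchanges kernels with cokernels, finite products with coproducts, injective objects with projective objects, and---this is the crux---injective hulls with projective covers; its double dual is the identity by reflexivity of Lefschetz duality.

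For (1), I would use that the module category ${}_{\K[t]}\mod$ has both enough projectives and enough injectives. Pushing these through the anti-equivalence, enough projectives on the module side yields enough injectives in $\Flow(\LC)$ and enough injectives yields enough projectives; that $\Flow(\LC)$ is abelian was already noted in \S\ref{ss:torsion}. For (2), every $\K[t]$-module has an injective hull (module categories are Grothendieck). Since a projective cover $\rho\colon P\to V$ is by definition the dual datum of an injective hull---a projective $P$ with superfluous kernel versus an injective $E$ with essential image---existence of injective hulls in ${}_{\K[t]}\mod$ transfers to existence of projective covers in $\Flow(\LC)$, and $(\widehat P,\widehat\rho)$ is then the injective hull of $(\widehat V,\widehat\f)$.

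For (3) and (4) I would first rewrite $\codi(V,\f)$ on the dual side. By definition it is the dual Goldie dimension of $\Lambda(V,\f)$; by the remark identifying the dual lattice of $\Lambda(V,\f)$ with $\Lambda(\widehat V,\widehat\f)$, this equals the Goldie dimension $\dim(\widehat V)$ of the submodule lattice of the $\K[t]$-module $\widehat V$. Since $\widehat P=E(\widehat V)$, Proposition~\ref{prop:hull}(a) gives $\codi(V,\f)=\dim(\widehat V)=\dim(E(\widehat V))=\dim(\widehat P)=\codi(P,\rho)$, proving (3). For (4), $\codi(V,\f)=n<\infty$ reads $\dim(\widehat V)=n$, which by Proposition~\ref{prop:hull}(b) is equivalent to $E(\widehat V)=\widehat P$ being a direct sum of $n$ indecomposable $\K[t]$-modules; dualising this finite biproduct via $\doublehat{-}\cong\mathrm{id}$ exhibits $P$ as a product of $n$ flows, each indecomposable because the anti-equivalence preserves indecomposability.

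The main obstacle I anticipate is the bookkeeping hidden in (2): one must verify carefully that the defining diagram of a projective cover is exactly the Lefschetz dual of that of an injective hull, so that the well-known existence and minimality of injective hulls on the module side translate verbatim. Once that correspondence, together with the lattice anti-isomorphism $\Lambda(V,\f)^{op}\cong\Lambda(\widehat V,\widehat\f)$, is established, parts (1), (3) and (4) are routine translations of standard module theory.
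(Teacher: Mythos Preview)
Your proposal is correct and follows exactly the approach indicated by the paper's one-line proof: transport everything along the anti-equivalence $\Flow(\LC)^{op}\simeq\Flow(\Vect)\simeq{}_{\K[t]}\mod$ induced by Lefschetz duality, and then invoke Proposition~\ref{prop:hull} on the module side. The only difference is that you have unpacked the details (the interchange of injective hulls with projective covers, and the lattice anti-isomorphism $\Lambda(V,\f)^{op}\cong\Lambda(\widehat V,\widehat\f)$) that the paper leaves implicit.
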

\begin{proof}
It follows by the fact that $\Flow(\LC)$ is the Lefschetz dual of $\Flow(\Vect)$ - which is nothing but ${}_{\K[t]}\mod$ - and Proposition~\ref{prop:hull}.
\end{proof}
\subsection{The corank function}\label{ss:corank}
For a topological flow $(V,\phi)$, one may consider the domain of completely positive topological entropy $(D_+(V,\f),\f\restriction_+)$ - i.e., the torsion subflow of $(V,\f)$ -  and give the following definition.
\begin{defn}
The {\it corank} $\mathrm{cork}(V,\f)$ of a topological flow $(V,\phi)$ is defined to be the dual Goldie dimension of $(D_+(V,\f), \f\restriction_+). $ Thus one obtains a {\it corank function} 
$$\mathrm{cork}\colon\Flow(\LC)\to\N\cup\{\infty\}.$$
\end{defn}
Indeed, the corank is meant to measure the $\K$-subspace where the map $\f$ is actually creating disorder. By Fact~\ref{fact:couniform}, one easily deduces the following properties:
\begin{itemize}
\item[(C1)] $\mathrm{cork}(V,\f)=0$ if, and only if, $D_+(V,\f)=0$.
\item[(C2)] $\mathrm{cork}(V,\f)=1$ if, and only if, $\Lambda(D_+(V,\f),\f\restriction_+)$ is couniform, i.e., every proper closed $\f\restriction_+$-invariant linear subspace of $D_+(V,\f)$ is superfluous in $(\Lambda (D_+(V,\f),\f\restriction_+), +,\cap)$.
\end{itemize}
\begin{rem}\label{rem:alg rk}
By analogy with the topological case, one can define the {\it rank} of an algebraic flow, say $\mathrm{rk}(W,\psi)$, by using the notion of  Goldie dimension: this rank does value the size of the factor of completely positive algebraic entropy, i.e., $\mathrm{rk}(W,\psi)=\dim(W,\psi):=\dim(\Lambda(F_+(W,\psi),\overline{\psi}^+))$, where $\dim$ denotes the Goldie dimension. Since $F_+(W,\psi)$ is  the torsion-free factor of the algebraic flow with respect to the torsion theory $(\ca T_{alg},\ca F_{alg})$, the notion of rank so introduced simply coincides with the torsion-free rank of the algebraic flow $(W,\psi)$ regarded as a module over $\K[t]$ (see Remark~\ref{rem:alg tor}) and so it does not yield any new information.
 \end{rem}
Finally, we are ready to prove that the topological entropy $\ent^*$ is essentially a dimension of the topological flow.
\begin{thmC}\label{thm:main}
For every topological flow $(V,\phi)$, one has
$$\ent^*(V,\phi)=\mathrm{cork}(V,\f).$$
\end{thmC}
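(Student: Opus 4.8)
The plan is to reduce to flows of completely positive topological entropy and then read off the two inequalities from Theorem~A and Fact~\ref{fact:inv}. First I would note that both sides are computed on the same object: by Corollary~\ref{cor:ent dom} one has $\ent^*(V,\f)=\ent^*(D_+(V,\f),\f\restriction_+)$, while by definition $\mathrm{cork}(V,\f)=\codi(D_+(V,\f),\f\restriction_+)$. Since $(D_+(V,\f),\f\restriction_+)$ has completely positive topological entropy, it therefore suffices to prove that $\ent^*(V,\f)=\codi(V,\f)$ under the additional assumption $V=D_+(V,\f)$, and then specialise. Recall that in this reduced situation $\codi(V,\f)$ is exactly the maximal cardinality of a coindependent family of proper closed $\f$-invariant subspaces of $V$.

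The upper bound $\codi(V,\f)\leq\ent^*(V,\f)$ is essentially Fact~\ref{fact:inv}(b): since $V=D_+(V,\f)$, every finite coindependent set of closed $\f$-invariant subspaces has cardinality at most $\ent^*(V,\f)$. Hence if $\ent^*(V,\f)=k\in\N$ no infinite coindependent family can exist and $\codi(V,\f)\leq k$, while if $\ent^*(V,\f)=\infty$ there is nothing to prove.

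For the lower bound I would split according to the value of $\ent^*(V,\f)$. When $\ent^*(V,\f)=0$ one has $D_+(V,\f)=0$, so $\codi(V,\f)=0$ by Fact~\ref{fact:couniform}(1). When $\ent^*(V,\f)=k\geq 2$, Theorem~A supplies $k$ coindependent non-stationary cocyclic $\f$-cotrajectories $C(\f,U_1),\ldots,C(\f,U_k)$; each is a proper closed $\f$-invariant subspace (being contained in the codimension-one space $U_i$), and for a coindependent family of proper subspaces of size $\geq 2$ no member can be the zero subspace, so these give a coindependent subset of $\Lambda(V,\f)\setminus\{0\}$ of size $k$ and $\codi(V,\f)\geq k$. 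In the borderline case $k=1$ it is enough that $\Lambda(V,\f)$ have more than one element: since $D_+(V,\f)=V\neq 0$, Fact~\ref{fact:couniform}(1) already yields $\codi(V,\f)\geq 1$. Finally, when $\ent^*(V,\f)=\infty$, Theorem~A provides an infinite coindependent family of non-stationary cocyclic $\f$-cotrajectories; every finite subfamily is coindependent and, of size $\geq 2$, consists of nonzero subspaces, so $\Lambda(V,\f)$ has coindependent subsets of arbitrarily large finite cardinality and $\codi(V,\f)=\infty$. Combining the two bounds in each case gives $\ent^*(V,\f)=\codi(V,\f)=\mathrm{cork}(V,\f)$.

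The only genuinely delicate point is the one just flagged. The family extracted from Theorem~A consists of cotrajectories, which are closed and $\f$-invariant but could a priori be the zero subspace (this already happens for the one-dimensional left Bernoulli shift, whose obvious cocyclic cotrajectory $\bigcap_n \f^{-n}U$ collapses to $\{0\}$), whereas the dual Goldie dimension only counts coindependent families inside $\Lambda(V,\f)\setminus\{0\}$. What resolves this cleanly is the elementary remark that in a coindependent family of proper subspaces of cardinality at least two every member is forced to be nonzero, so the difficulty survives only at $\ent^*(V,\f)=1$, where it is dispatched by noting that $D_+(V,\f)\neq 0$ already forces $\codi(V,\f)\geq 1$.
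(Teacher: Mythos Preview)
Your argument is correct and follows essentially the same route as the paper: reduce to $V=D_+(V,\f)$, extract the lower bound $\ent^*(V,\f)\leq\codi(V,\f)$ from the coindependent family of cotrajectories supplied by Theorem~A, and get the upper bound from Fact~\ref{fact:inv}(b) (equivalently Proposition~\ref{prop:number}). The paper adds a separate, and logically redundant, contrapositive argument for $\mathrm{cork}=\infty\Rightarrow\ent^*=\infty$ via the couniform/superfluous characterisation of Proposition~\ref{prop:goldie}; your case split already covers this.

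One remark on the point you flag as ``genuinely delicate''. Your worry that a cotrajectory might equal $\{0\}$ stems from reading \S\ref{ss:dgd} literally, where coindependent families are taken in $L\setminus\{0\}$; but this is inconsistent with the module-theoretic definition in \S\ref{ss:coind} (coindependent families of \emph{proper} submodules, i.e.\ elements $\neq 1$) and with the couniform/superfluous formulation in Proposition~\ref{prop:goldie}, both of which are the intended ones for dual Goldie dimension. Under that intended reading the cotrajectories from Theorem~A are automatically admissible (each $C(\f,U_i)\subseteq U_i\subsetneq V$), and no special treatment of $k=1$ is needed. Your workaround is harmless, but the difficulty is an artefact of a typo rather than a genuine subtlety.
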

\begin{proof}[Proof of Theorem C]  
By Corollary \ref{cor:ent dom}, $V$ can be replaced by its domain of completely positive topological entropy without loss of generality.
So let assume $\ent^*(V,\f)>>0$.

When $\ent^*(V,\f)=0$ there is nothing to prove. 
Thus firstly suppose $\ent^*(V,\f)=k\in\N_{>0}$. By Theorem~B and Proposition~\ref{prop:number}, one obtains $k\leq\mathrm{cork}(V,\f)\leq k$, i.e., $k=\mathrm{cork}(V,\f)$. 

Now assume $\ent^*(V,\f)=\infty$. By Theorem B, $\ent^*(V,\f)\leq\mathrm{cork}(V,\f)$ i.e.,  $\mathrm{cork}(V,\f)=\infty$. 
Vice-versa, let $\mathrm{cork}(V,\f)=\infty$.   Suppose ad absurdum that $\ent^*(V,\f)=k$ is finite. Then there exist $k$ coindependent non-stationary cocyclic $\f$-cotrajectories $\{C_i\mid i=1,\dots,k\}$ by Theorem~B. By Proposition~\ref{prop:cotraj coind}, $\ent^*(V/C_i,\overline\f^i)=1$ for all $i=1,\dots,k$, where $\overline\f^i\colon V/C_i\to V/C_i$ is induced by $\f$. Since the topological entropy and the corank coincide in the finite case, one has $\mathrm{cork}(V/C_i,\overline\f^i)=1$ for all $i=1,\dots,k$ by Proposition~\ref{prop:cotraj coind}. Therefore the lattice $\Lambda(V/C_i,\overline\f^i)$ is couniform for every $i=1,\dots,k$ (see property (C2) above), i.e., $C_i$ is a couniform element of the lattice $\Lambda(V,\f)$. Finally, we claim that $C=C_1\cap\ldots\cap C_k$ is superfluos in the lattice $ \Lambda(V,\f)$, i.e., $C+W\neq V$ for every proper closed $\f$-invariant linear subspace of $V$. Thus Proposition~\ref{prop:goldie}(2) yields the contradiction: $\mathrm{cork}(V,\f)=\codi(V,\f)<\infty$.

Let us prove the claim. Let $W$ be a closed $\f$-invariant $\K$-subspace of $V$ such that $C+W=V$. Then $\{C_1,\dots,C_k,W\}$ is a coindependent set of closed $\f$-invariant $\K$-subspaces of $V$ of cardinality $>\ent^*(V,\f)$. By Proposition~\ref{prop:number} and $\ent^*(V,\f)>>0$, one has $W=C_j$ for some $j\in\{1,\ldots,k\}$, and so $W=C+W=V$.\end{proof}
We conclude  applying Theorem~C to produce an alternative proof of the following significant result. 
  \begin{bridge} Let $(W,\psi)$ be an algebraic flow. Then
 $$\ent(W,\psi)=\ent^*(\widehat W,\widehat\psi).$$
 \end{bridge}
 \begin{proof} 
 By \cite[Corollary 4.8]{GBSalce},
 \begin{equation}\label{eq:1}
 \ent(W,\psi)=\dim(\Lambda(F_+(W,\psi),\overline{\psi}^+))=\rk(W,\psi);
 \end{equation}
 see Remark~\ref{rem:alg rk}. Therefore, one computes
 \begin{align*}
 \ent(W,\psi)&=\dim\Lambda(F_+(W,\psi),\overline{\psi}^+)\quad\text{(by \eqref{eq:1})}\\
 &=\codi{\reallywidehat{\Lambda(F_+(W,\psi),\overline{\psi}^+)}}\quad\text{(by Lefschetz duality for lattices)}\\
 &=\codi{\Lambda(D_+(\widehat W,\widehat\psi),(\widehat\psi)\restriction_+)}\quad\text{(by Theorem~B)}\\
 &=\mathrm{cork}((\widehat W,\widehat\psi))=\ent^*(\widehat W,\widehat\psi)\quad\text{(by definition and Theorem~C).}
 \end{align*}
  \end{proof}
 By using the reductions steps described in \cite{CGB1,CGB2}, one then obtains also the Bridge Theorem  \cite[Theorem 1.3]{CGB2} for locally linearly compact vector spaces as a corollary of the one stated here.
 \section*{Acknowledgements} 
 This work was supported by  EPSRC Grant N007328/1 Soluble Groups and Cohomology and partially supported by
 Programma SIR 2014 by MIUR  (Project GADYGR, Number RBSI14V2LI, cup G22I15000160008).

\medskip

I thank my friends Dikran Dikranjan, Anna Giordano Bruno and Simone Virili for useful discussions on the topic of this paper. Moreover, I thank Anna Giordano Bruno and Peter Kropholler for comments on  early versions of this work. Finally, I thank the referee for helpful suggestions.


\end{document}